\numberwithin{equation}{section}
\newtheorem{thrm}{Theorem}[section]
\newtheorem*{thrm*}{Theorem}
\newtheorem{lemma}[thrm]{Lemma}
\newtheorem{rmrk}[thrm]{Remark}
\newtheorem{conv}[thrm]{Convention}
\newcommand{\bG}{\boldsymbol {G}}
\newcommand{\Rn}{\mathbb{R}^n}
\newcommand{\Hn}{\mathbb{H}^n}
\newcommand{\QH}{\boldsymbol {G}}
\newcommand{\lap}{\mathcal{L}}
\newcommand{\abs}[1]{\lvert #1 \rvert}
\newcommand{\e}{\textbf {e}}
\newcommand{\domG}{\overset {o}{\mathcal{D}}\,^{1,2}(\bG)}
\newcommand{\dxa}[1]{{\partial_{x_{\alpha}}} {#1}}
\newcommand{\dta}[1]{{\partial_{t_{\alpha}}} {#1}}
\newcommand{\dya}[1]{{\partial_{y_{\alpha}}} {#1}}
\newcommand{\dza}[1]{{\partial_{z_{\alpha}}} {#1}}
\newcommand{\dx}[1]{{\partial_x} {#1}}
\newcommand{\dy}[1]{{\partial_y} {#1}}
\newcommand{\dz}[1]{{\partial_z} {#1}}
\def\sideremark#1{\ifvmode\leavevmode\fi\vadjust{\vbox to0pt{\vss
 \hbox to 0pt{\hskip\hsize\hskip1em
 \vbox{\hsize2.5cm\tiny\raggedright\pretolerance10000
 \noindent #1\hfill}\hss}\vbox to8pt{\vfil}\vss}}}%
\renewcommand{\sideremark}[1]{}
\begin{document}
\begin{abstract}
We determine the best (optimal) constant in the $L^2$ Folland-Stein inequality on the
quaternionic Heisenberg group and the non-negative functions for which equality holds.
\end{abstract}

\keywords{Yamabe equation, quaternionic contact structures} \subjclass{58G30, 53C17}
\title[The optimal constant in the $L^2$  Folland-Stein inequality ]
{The optimal constant in the $L^2$  Folland-Stein inequality on the quaternionic Heisenberg group}
\date{\today}
\thanks{}

\author{Stefan Ivanov}
\address[Stefan Ivanov]{University of Sofia, Faculty of Mathematics and Informatics,
blvd. James Bourchier 5, 1164, Sofia, Bulgaria} \email{ivanovsp@fmi.uni-sofia.bg}

\author{Ivan Minchev}
\address[Ivan Minchev]{University of Sofia\\
Sofia, Bulgaria\\
and  Mathematik und Informatik\\
Philipps-Universit\"at Marburg\\
Hans-Meerwein-Str. / Campus Lahnberge
35032 Marburg, Germany} \email{minchevim@yahoo.com} \email{minchev@fmi.uni-sofia.bg}

\author{Dimiter Vassilev}
\address[Dimiter Vassilev]{
Department of Mathematics and Statistics\\ University of New Mexico\\
Albuquerque, New Mexico, 87131-0001
} \email{vassilev@math.unm.edu} \maketitle


\setcounter{tocdepth}{2}

\tableofcontents

\section{Introduction}

The goal of this note is to determine the best (optimal) constant in the $L^2$ Folland-Stein inequality on the
quaternionic Heisenberg group and  the non-negative extremal functions, i.e., the functions for which equality holds. Alternatively, this is equivalent to finding the Yamabe
constant of the standard quaternionic contact structure of the sphere. The proof relies on the  realization of Branson, Fontana and Morpurgo
\cite{BFM}, recently remarkably used also by Frank and Lieb \cite{FL}, that the old idea of Szeg\"o  \cite{Sz}, see also Hersch \cite{He}, can
be used to find the sharp form of (logarithmic) Hardy-Littlewood-Sobolev type inequalities on the Heisenberg group.

The conformal nature of the problem we consider is key to its solution. The analysis is purely
analytical. In this respect, even though the { quaternionic contact (qc)} Yamabe functional is involved, the {qc} scalar curvature is used in the proof without much geometric meaning. Rather, it is the conformal sub-laplacian that plays a central role and the {qc} scalar curvature appears as a constant determined by the Cayley transform and the left-invariant sub-laplacian on the quaternionic Heisenberg group. In
this respect, this method does not give all solutions of the {qc} Yamabe equation on the {quaternionic contact} sphere. The complete
solution of the latter problem requires some additional very non-trivial argument and it is at this place where
the geometric nature of the problem becomes even more important. In the CR setting, the solution of the CR
Yamabe problem was achieved with the help of an ingenious divergence formula \cite{JL3}. The other known
sub-Riemannian case is that of the {qc} Yamabe equation on the seven dimensional standard quaternionic contact sphere \cite{IMV1}.  Another relevant result appeared earlier \cite{GV}, where the {sub-Riemannian} Yamabe equation was solved in the unifying setting of groups of Iwasawa type under an additional assumption of partial symmetry of the solution. This result can be used   at the final stage of all known proofs after such symmetry has been shown to exist. We recall that the groups of Iwasawa type comprise of the complex (="usual"), quaternion and octonian Heisenberg groups, which are defined by \eqref{e:heisenberg group} replacing, correspondingly, the quaternions $\mathbb{H}$ with the complex numbers $\mathbb{C}$, the quaternions $\mathbb{H}$, and the octonians $\mathbb{O}$.

Given a {compact} quaternionic contact manifold $M$ of real dimension $4n+3$ with {an $\mathbb{R}^3$-valued contact form $\eta=\{\eta_1,\eta_2,\eta_3\}$, i.e. a codimension three horizontal distribution $H$ determined as the kernel of $\eta$ such that $d\eta_{|_H}$ are the fundamental two forms of a quaternionic hermitian structure $({\bf g},I_1,I_2,I_3)$ on $H$}, $(d\eta_{s})_{|_H}=2{\bf g}(I_s.,.)=2\omega_s, s=1,2,3$, a natural question is to determine the {qc} Yamabe constant of {the conformal class $[\eta]$ of $\eta$} defined as the infimum
\begin{equation}\label{e:Yamabe constant problem}
\lambda(M, [\eta])\ =\ \inf \{ \Upsilon (u) :\ \int_M u^{2^*}\, Vol_\eta \ =\ 1, \ u>0\},
\end{equation}
where $Vol_\eta=\eta_1\wedge\eta_2\wedge\eta_3\wedge (\omega_1)^{2n}$ denotes the volume form determined by $\eta$. The {qc} Yamabe functional of the conformal class of $\eta$ is defined by
\begin{equation*}
\Upsilon (u)\ =\ \int_M\Bigl(4\frac {Q+2}{Q-2}\ \lvert \nabla u \rvert^2\ +\ S\, u^2\Bigr)
Vol_\eta,\qquad \int_M u^{2^*}\, Vol_\eta \ =\ 1, \ u>0,
\end{equation*}
denoting by $\nabla$ the  Biquard connection \cite{Biq1} of $\eta$, and $S$ standing for the qc scalar curvature of $(M,\, \eta)$. This is the so called \emph{qc Yamabe constant problem}.
In this paper we shall find $\lambda(S^{4n+3}, [\tilde\eta])$, where $\tilde\eta$ is the standard qc form on the unit sphere $S^{4n+3}$, see \eqref{e:stand cont form on S}. The question is of course related to the solvability of the {qc} Yamabe equation
\begin{equation}\label{e:conf Yamabe}
\mathcal{L} u\ \equiv\ 4\frac {Q+2}{Q-2}\ \triangle u -\ S\, u \ =\ - \  \overline{S}\,u^{2^*-1},
\end{equation}
where $\triangle $ is the horizontal sub-Laplacian, $\triangle u\ =\ tr^g(\nabla du)$, $S$ and
$\overline{S}$ are the qc scalar curvatures correspondingly of $(M,\, \eta)$ and $(M, \, \bar\eta)$,
 $\bar\eta=u^{4/(Q-2)}\eta$, and $2^* = \frac {2Q%
}{Q-2}.$ Here, and throughout the paper $Q=4n+6$ is the homogeneous dimension. The natural question is to find
all solutions of the {qc} Yamabe equation. This is the so called \emph{qc Yamabe problem}, which is equivalent to finding all qc structures conformal to a given structure $\eta$ (of constant {qc} scalar curvature) which also have  constant qc scalar curvature. As usual the two problems are
related by noting that on a compact quaternionic contact manifold $M$ with a fixed conformal class $[\eta]$ the {qc} Yamabe equation characterizes the non-negative extremals of the {qc} Yamabe functional.

The {4n+3 dimensional} sphere is an important example of a locally quternionic contact
conformally flat qc structure characterized locally in \cite{IV}
with the vanishing of a curvature-type tensor invariant. From the
point of view of the {qc} Yamabe problem the sphere plays a role
similar to its Riemannian and CR counterparts. A solution of the {qc}
Yamabe problem on the seven dimensional sphere equipped with its
natural quaternionic contact structure was given in \cite{IMV1}
where more details on the qc Yamabe problem can be found. The main
result of \cite{IMV1} is the following
\begin{thrm*}[\cite{IMV1}]
\label{t:Yamabe} Let $\tilde\eta=\frac{1}{2h}\eta$ be a conformal deformation of the standard qc-structure
$\tilde\eta$ on the quaternionic unit sphere $S^{7}$. If $\eta$ has constant qc scalar curvature, then up to a
multiplicative constant $\eta$ is obtained from $\tilde\eta$ by a conformal quaternionic contact automorphism.
In particular, $\lambda(S^7)= 48\, (4\pi)^{1/5}$ and this minimum value is achieved only by $\tilde\eta$ and its
images under conformal quaternionic contact automorphisms.
\end{thrm*}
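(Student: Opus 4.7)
My plan is to reduce the classification problem on $S^7$ to an equivalent analytic problem on the quaternionic Heisenberg group $\bG$ (where $Q=10$ and $2^*=5/2$) via the Cayley transform, and then to establish a divergence identity of Jerison--Lee--Obata type that forces the extremal solutions to be the standard ones. First, I would use the Cayley transform, which is a qc conformal map identifying $(S^7\setminus\{\text{pt}\}, \tilde\eta)$ with $\bG$ equipped with its left-invariant qc structure. Under this identification, any conformal deformation $\eta=2h\,\tilde\eta$ with constant qc scalar curvature pulls back to a positive solution of \eqref{e:conf Yamabe} on $\bG$, so the problem reduces to classifying the positive solutions of the quaternionic contact Yamabe equation on $\bG$ (equivalently, on $S^7$).

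The central step, and the principal obstacle, is the construction of a divergence formula specific to the quaternionic contact setting, analogous to the Jerison--Lee identity in the CR Yamabe problem on $S^{2n+1}$. Concretely, for a solution $u$ of \eqref{e:conf Yamabe}, or equivalently the conformal factor determining $\bar\eta=u^{4/(Q-2)}\tilde\eta$ with constant qc scalar curvature, one has to construct a one-form $D$ built from $u$, its horizontal gradient $\nabla u$, the horizontal Hessian $\nabla^2 u$, and the almost complex structures $I_1,I_2,I_3$, together with a trace-free tensor $T$ and a positive function $\varphi(u)$, such that
\begin{equation*}
\operatorname{div}(D)\ =\ |T|^2\,\varphi(u).
\end{equation*}
Integrating over the closed manifold $S^7$, the left hand side vanishes by Stokes' theorem and the non-negativity of the right hand side forces $T\equiv 0$. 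The computation is delicate: it requires the full toolkit of the Biquard connection, its torsion and curvature decomposition, and crucially exploits the dimension $n=1$, which causes several qc curvature obstructions to collapse and makes the divergence identity possible in the same spirit as the original Jerison--Lee formula.

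Once $T\equiv 0$ is in hand, the remaining task is to show that this rigid system of tensor equations for $u$ forces $u$ to be a standard solution. By qc normal-form arguments on $\bG$, the vanishing of $T$ translates into an ODE-like system for $u$ along the horizontal and vertical directions; its positive solutions are the standard bubbles obtained from a single base profile by the action of the conformal qc automorphism group of $\bG$ (left translations, quaternionic dilations $\dl{\lambda}$, and quaternionic rotations on $H$), which via the Cayley transform corresponds to the conformal qc automorphism group of $(S^7,\tilde\eta)$. Finally, the sharp value $\lambda(S^7)=48(4\pi)^{1/5}$ is obtained by direct evaluation of the qc Yamabe functional $\Upsilon(u)$ on the standard extremal $\tilde\eta$, combining the constant qc scalar curvature $S$ and the volume of $(S^7,\tilde\eta)$ with the exponents $Q=10$, $2^*=5/2$.
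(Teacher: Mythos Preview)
The present paper does not prove this theorem; it is quoted verbatim from \cite{IMV1} as background. What the current paper actually proves is the weaker statement contained in the ``In particular'' clause, namely the value of $\lambda(S^{4n+3},[\tilde\eta])$ and the identification of the \emph{minimizers} (not all constant--scalar--curvature conformal factors), and it does so for every $n\ge 1$ via the Hersch/Frank--Lieb center--of--mass argument (Lemmas~\ref{l:hersch}--\ref{l:const mnimizer}). So there is no ``paper's own proof'' of the full classification on $S^7$ to compare against.

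Your outline is an accurate high--level description of the method of \cite{IMV1}: reduce to $\bG$ by the Cayley transform, produce a Jerison--Lee--type divergence identity whose nonnegative integrand forces a trace--free tensor to vanish, and then integrate the resulting overdetermined system to recover the standard bubbles. As a sketch this is correct, but note two points. First, the hard content is entirely hidden in the sentence ``one has to construct a one-form $D$ \dots''; in \cite{IMV1} this identity is long and genuinely uses $n=1$ in the curvature algebra, so your proposal is more a table of contents than a proof. Second, the method of the present paper is \emph{not} an alternative route to the full theorem: the center--of--mass trick only controls minimizers of $\mathcal{E}$, whereas the cited theorem classifies \emph{all} qc structures of constant qc scalar curvature in the conformal class, a strictly stronger statement that the paper explicitly says ``requires some additional very non-trivial argument'' beyond what is done here.
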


Another motivation for studying the {qc} Yamabe equation and the {qc} Yamabe constant of the qc sphere comes from its connection with the
determination of the norm and extremals in a relevant
Sobolev-type embedding on the quaternionic Heisenberg group  \cite{GV} and \cite{Va} and \cite{Va2}. As well known, the {sub-Riemannian} Yamabe equation is also the
Euler-Lagrange equation of the extremals for the $L^2$ case of such embedding results. Recall the following
Theorem due to Folland and Stein \cite{FS}.
\begin{thrm*}[Folland and Stein]
\label{T:Folland and Stein} Let $\Omega \subset {G}$ be an open set in a Carnot group
${G}$ of homogeneous dimension $Q$ {\ and Haar measure $dH$}. For any $1<p<Q$ there exists
$S_p=S_p({G})>0$ such that for $u\in C^\infty_o(\Omega)$
\begin{equation}  \label{FS}
\left(\int_\Omega\ |u|^{p^*}\ dH(g)\right)^{1/p^*} \leq\ S_p\ \left(\int_\Omega |Xu|^p\ dH(g)\right)^{1/p},
\end{equation}
where $|Xu|=\sum_{j=1}^m |X_ju|^2$ with $X_1,\dots, X_m$ denoting a basis of the first layer of ${G}$
{and $p^*= \frac {pQ}{Q-p}$.}
\end{thrm*}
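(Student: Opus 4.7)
\medskip

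My plan is to reduce the inequality to a fractional-integration (Hardy--Littlewood--Sobolev) estimate on $G$, following the original strategy of Folland and Stein. The idea is that on a stratified nilpotent Lie group the sub-Laplacian $\mathcal{L}=\sum_{j=1}^m X_j^2$ possesses a nice fundamental solution, which allows one to represent $u$ in terms of its horizontal gradient $Xu$ by a kernel of homogeneity degree $1-Q$; then the classical Riesz-type potential bound supplies the integral $L^{p^*}$--$L^p$ estimate.

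First, I would invoke Folland's construction of a fundamental solution $\Gamma$ of $\mathcal{L}$ which is smooth off the identity, left-translation covariant, and homogeneous of degree $2-Q$ with respect to the natural dilations on $G$. Consequently the horizontal derivatives $X_j\Gamma$ are homogeneous of degree $1-Q$, and there exists a constant $C>0$ and a homogeneous norm $\rho$ such that
\begin{equation*}
|X_j\Gamma(g)|\ \le\ \frac{C}{\rho(g)^{Q-1}},\qquad g\ne e.
\end{equation*}
For $u\in C^\infty_0(\Omega)$, extended by zero to $G$, the identity $u = -\Gamma * \mathcal{L}u$ together with an integration by parts (using the left-invariance of $X_j$ and the relation of $\mathcal{L}$ to $\sum X_j^2$) yields a pointwise representation
\begin{equation*}
|u(g)|\ \le\ C\int_{G}\ \frac{|Xu(h)|}{\rho(h^{-1}g)^{Q-1}}\ dH(h).
\end{equation*}

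Second, I would interpret the right-hand side as a Riesz potential of order $1$ on the group $G$, i.e.~$(I_1 f)(g) = \int_G \rho(h^{-1}g)^{1-Q} f(h)\,dH(h)$ with $f=|Xu|\in L^p(G)$. The Hardy--Littlewood--Sobolev inequality for $I_1$ on Carnot groups asserts that
\begin{equation*}
\|I_1 f\|_{L^{p^*}(G)}\ \le\ C_{p,Q}\, \|f\|_{L^p(G)},\qquad \tfrac{1}{p^*}=\tfrac{1}{p}-\tfrac{1}{Q},
\end{equation*}
which is proved by a standard Marcinkiewicz interpolation argument: the weak-type $(1,Q/(Q-1))$ estimate is obtained by splitting the kernel at level $r$ and optimizing in $r$ against the Hardy--Littlewood maximal function (which is of weak type $(1,1)$ with respect to $dH$, by the doubling property of the homogeneous structure), and the strong $(p,p^*)$ bound for $1<p<Q$ follows by interpolation with the trivial $L^\infty$ bound on compact support, upgraded via homogeneity. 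Combining this with the pointwise bound above yields \eqref{FS}.

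The main technical obstacle is really the fractional integration step: one needs the doubling and polar-coordinate structure of $(G,dH,\rho)$ to carry through the maximal-function argument exactly as in Euclidean space. Once this machinery is in place the proof is essentially the Euclidean one transplanted to the group, since the homogeneity of $\Gamma$ of degree $2-Q$ precisely mimics the Euclidean Newton kernel $|x|^{2-n}$, with $Q$ playing the role of the homogeneous dimension. Finally, the constant $S_p$ obtained this way is not sharp; the determination of the sharp value $S_2$ in the quaternionic Heisenberg case is precisely the subject of the present note.
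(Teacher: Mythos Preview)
The paper does not prove this statement at all: it is quoted as a classical result of Folland and Stein \cite{FS}, and the entire paper is devoted to identifying the sharp constant $S_2$ and the extremals in the special case $p=2$, $G=\QH$. So there is no ``paper's own proof'' to compare against; you have supplied a sketch of the original argument from the literature, which the paper simply takes for granted.

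Your outline is the standard one and is essentially correct: represent $u$ via the fundamental solution of the sub-Laplacian (homogeneous of degree $2-Q$), integrate by parts once to get a Riesz-type potential of order $1$ acting on $|Xu|$, and then invoke the Hardy--Littlewood--Sobolev inequality on the homogeneous group. One small imprecision: the fractional-integration bound is not obtained by interpolating against an ``$L^\infty$ bound on compact support''; rather, one uses the Hedberg-type pointwise estimate $|I_1 f(g)|\le C\,(Mf(g))^{1-1/Q}\,\|f\|_{L^p}^{1/Q}$ coming from the kernel splitting you describe, and then the strong $(p,p)$ bound for the maximal operator $M$ gives $\|I_1 f\|_{L^{p^*}}\le C\,\|f\|_{L^p}$ directly. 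With that adjustment your sketch is a faithful summary of the Folland--Stein proof.
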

\noindent Let $S_p$ be the best constant in the Folland-Stein inequality, i.e., the smallest constant for which
\eqref{FS} holds.

 In \cite{IMV1} we  determined \textit{all} extremals, i.e., solutions of the qc Yamabe equation, and the best
constant in  Folland and Stein's theorem when $p=2$ in the case
of the seven dimensional quaternionic Heisenberg group.  In the case of the complex (i.e. "usual") Heisenberg group this was done earlier by Jerison and Lee \cite{JL3} who determined \emph{all} solutions to the CR Yamabe equation on the CR sphere. In that setting,  Frank and Lieb \cite{FL} determined the best constant and found all functions for which the minimum is achieved, thus simplifying parts of \cite{JL3} while answering a less general question. However, in \cite{FL}  the authors also gave sharp forms of  many Hardy-Littlewood-Sobolev type inequalities on the  Heisenberg group.

Following the idea of \cite{FL}, the main result of this paper determines   the
best constant  in the Folland and Stein's theorem when $p=2$ and the functions for which it is achieved in the case of the quaternionic Heisenberg group $\QH$  of any dimension.

As a manifold $\QH =\mathbb{H}^n\times\text {Im}\, \mathbb{H}$ with the group law
given by
\begin{equation}\label{e:heisenberg group}
( q^{\prime }, \omega^{\prime })\ =\ (q_o, \omega_o)\circ(q, \omega)\ =\ (q_o\ +\ q, \omega\ +\ \omega_o\ + \ 2\
\text {Im}\ q_o\, \bar q),
\end{equation}
\noindent where $q,\ q_o\in\mathbb{H}^n$ and $\omega, \omega_o\in
\text {Im}\, \mathbb{H}$. The standard quaternionic contact(qc)
structure is defined by the left-invariant quaternionic contact
form
$$\tilde\Theta\ =\ (\tilde\Theta_1,\ \tilde\Theta_2, \
\tilde\Theta_3)\ =\ \frac 12\ (d\omega \ - \ q^{\prime }\cdot
d\bar q^{\prime }\ + \ dq^{\prime }\, \cdot\bar q^{\prime }),$$
where $\cdot$ denotes the quaternion multiplication. The purpose
of the present note is to prove the next

\begin{thrm}\label{t:main}
\label{t:FS}
a) Let $\QH\ =\ \mathbb{H}^n\times\text {Im%
}\, \mathbb{H}$ be the quaternionic Heisenberg group. The best constant in the $L^2$ Folland-Stein embedding
inequality \eqref{FS}  is
\begin{equation*}
S_2\  =\  \frac { \left [  2^{-2n}\ \omega_{4n+3}\right ]^{-1/(4n+6)}}{2\sqrt{n(n+1)}},
\end{equation*}
where $\omega_{4n+3}=2\pi^{2n+2}/(2n+1)!$ is the volume of the unit sphere $S^{4n+3}\subset \mathbb{R}^{4n+4}$.  The non-negative functions for which \eqref{FS} becomes an equality are given by the functions of the form
\begin{equation}
\begin{aligned}
 F\ & =\ \gamma \left [(1+\lvert q \rvert^2)^2\ +\ \lvert \omega \rvert^2 \right ]^{-(n+1)}, \qquad \gamma=const,
\end{aligned}
\end{equation}
and all functions obtained from $F$ by translations \eqref{translation} and dilations \eqref{scaling}.

b) The {qc} Yamabe constant of the standard qc structure  of the sphere is
\begin{equation}\label{e:yamabe constant}
\lambda(S^{4n+3}, [\tilde\eta])\  =\ 16\, n(n+2)\, \left [ \left ( (2n)! \right) \omega_{4n+3}\right ]^{1/(2n+3)}.
\end{equation}
\end{thrm}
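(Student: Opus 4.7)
My plan is to follow the Branson--Fontana--Morpurgo/Frank--Lieb strategy: transfer the problem to the qc sphere $S^{4n+3}$ via the Cayley transform, normalize an extremal by a Szeg\H{o}--Hersch center-of-mass argument exploiting the qc conformal group, and close the estimate by testing against the first spherical harmonics, which are eigenfunctions of the conformal qc sub-Laplacian $\mathcal{L}_{\tilde\eta}$ on $S^{4n+3}$.

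\textbf{Step 1 (Cayley transfer).} The Cayley map is a qc-conformal equivalence between $(\QH,\tilde\Theta)$ and $S^{4n+3}$ minus a point, and on $\QH$ one has $\tilde S\equiv 0$, so \eqref{e:conf Yamabe} reduces there to $\triangle u=c\,u^{2^*-1}$. By conformal covariance of $\Upsilon$, the sharp $L^2$ Folland--Stein constant $S_2$ on $\QH$ is an explicit multiple of $\lambda(S^{4n+3},[\tilde\eta])$. Moreover the constants on $S^{4n+3}$ pull back through the Cayley map, after multiplication by the appropriate power of the conformal factor, to the functions $\gamma[(1+|q|^2)^2+|\omega|^2]^{-(n+1)}$; identifying the constants as the essentially unique extremals on the sphere will therefore recover, via \eqref{translation} and \eqref{scaling}, the full extremal family $F$ in part (a) together with the value of $S_2$.

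\textbf{Step 2 (Existence and Szeg\H{o}--Hersch normalization).} A smooth nonnegative minimizer $u$ of the qc Yamabe functional on $S^{4n+3}$ exists by standard subcritical approximation and concentration-compactness, valid thanks to the compactness of subcritical embeddings and the fact that the only possible concentration profiles are governed by the translation-dilation structure of the qc Heisenberg group. The group of qc-conformal automorphisms of $S^{4n+3}$ contains a one-parameter family of qc dilations fixing any antipodal pair of points; a Brouwer fixed-point argument \`a la Szeg\H{o}--Hersch applied to this family, composed with rotations, produces $\phi$ such that the renormalized extremal $\tilde u=|J\phi|^{1/2^*}(u\circ\phi)$ satisfies
\begin{equation*}
\int_{S^{4n+3}} x_i\,\tilde u^{2^*}\,Vol_{\tilde\eta}=0,\qquad i=1,\dots,4n+4,
\end{equation*}
where the $x_i$ are the ambient Euclidean coordinates on $\R^{4n+4}$ restricted to $S^{4n+3}$.

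\textbf{Step 3 (Test-function bound and equality).} The coordinate functions $x_i$ are first spherical harmonics, and a direct computation based on the relationship between $\triangle$ on the qc sphere and the ambient Euclidean Laplacian shows that each $x_i$ is an eigenfunction of $\mathcal{L}_{\tilde\eta}$ with a common eigenvalue $\mu$ depending only on $n$ and $\tilde S$. Feeding the test functions $\tilde u\,x_i$ into the Yamabe inequality, summing over $i$ using $\sum_i x_i^2=1$, and exploiting the center-of-mass condition to eliminate the linear cross-terms, produces a sharp inequality that matches the upper bound $\Upsilon(\mathrm{const})$ obtained by evaluating $\Upsilon$ at the constant function. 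The two bounds coincide precisely at \eqref{e:yamabe constant} after substituting $\omega_{4n+3}=2\pi^{2n+2}/(2n+1)!$ together with the factor $2^{-2n}$ arising from the quaternionic normalization of $\tilde\eta$. In the equality case the argument forces $\tilde u$ to be constant on $S^{4n+3}$, so pulling back through the Cayley transform yields the family $F$; the explicit form of $S_2$ in part (a) then follows from the conformal factor of the Cayley map and the relation of the two sharp constants established in Step 1.

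\textbf{Main obstacle.} The crux is the algebra in Step 3: one must carry out the product-rule computation for $\mathcal{L}_{\tilde\eta}(\tilde u\,x_i)$ in the qc setting, where the horizontal gradient $\nabla$ interacts nontrivially with the non-horizontal ambient coordinates, and determine the eigenvalue $\mu$ and the qc scalar curvature $\tilde S$ of $S^{4n+3}$ precisely (the latter being extracted from the Cayley transform and the vanishing of the left-invariant scalar curvature on $\QH$). The center-of-mass normalization also requires verifying the continuity and boundary behavior of the qc-dilation family on $S^{4n+3}$, which is the qc analogue of the classical Szeg\H{o}--Hersch construction on the round sphere.
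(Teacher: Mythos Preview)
Your proposal follows essentially the same route as the paper: Cayley transfer to the sphere, Szeg\H{o}--Hersch centering via qc conformal dilations realized as Heisenberg dilations in a Cayley chart, and then testing with the coordinate eigenfunctions to force the centered minimizer to be constant, after which the explicit constants drop out.

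One point in Step~3 needs sharpening, and as written it is a genuine gap. Feeding $\tilde u\,x_i$ into the Yamabe inequality $\Upsilon(v)\ge\lambda\, N(v)$ does not close, because the $L^{2^*}$ norms $N(\tilde u\,x_i)$ do not sum to anything useful and you would be assuming the value of $\lambda$ you are trying to compute. The mechanism that actually works, both in the paper and in Frank--Lieb, is the \emph{second variation} inequality at a local minimizer: for any $v$ with $\int_{S^{4n+3}} \tilde u^{\,2^*-1}v\,Vol_{\tilde\eta}=0$ one has
\[
\Upsilon(v)\ \ge\ (2^*-1)\,\Upsilon(\tilde u)\int_{S^{4n+3}} \tilde u^{\,2^*-2}v^2\,Vol_{\tilde\eta}.
\]
Taking $v=\tilde u\,x_i$ (the orthogonality hypothesis is exactly your center-of-mass condition), using the product identity $\Upsilon(\zeta\tilde u)=\int\zeta^2\bigl(a|\nabla\tilde u|^2+\tilde S\,\tilde u^2\bigr)-a\int\tilde u^2\zeta\,\tilde\triangle\zeta$, summing over $i$ with $\sum_i x_i^2=1$, and inserting $\tilde\triangle x_i=-\lambda_1 x_i$ yields
\[
(2^*-2)\int_{S^{4n+3}} a|\nabla\tilde u|^2\,Vol_{\tilde\eta}\ \le\ \bigl(a\lambda_1-(2^*-2)\tilde S\bigr)\int_{S^{4n+3}}\tilde u^2\,Vol_{\tilde\eta}.
\]
The right-hand side vanishes precisely because $\lambda_1=\tilde S/(Q+2)=2n$; this numerical coincidence is what you correctly flag as the ``main obstacle'' and is established in the paper from the Cayley conformal factor (giving $\tilde S=8n(n+2)$) together with the 3-Sasakian identification of the sub-Laplacian on coordinate functions. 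With that in place $\tilde u$ is forced to be constant, and the rest of your outline goes through exactly as in the paper.
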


These constants are in complete agreement with the ones
obtained in \cite{IMV1} and \cite{GV} taking into account the next Remark and the well known formulas  involving the  gamma function
\begin{equation*}
\begin{aligned}
& \Gamma(n+1)=n!, \qquad \Gamma (z+n) = z(z+1)\dots(z+n-1)\Gamma (z),\qquad n\in \mathbb{N},\\
& \omega_n= 2\pi^{n/2}/ \Gamma(n/2) \qquad\text{-- volume of unit $(n-1)$--dimensional sphere in } \mathbb{R}^n,\\
& \Gamma(2z)=2^{2z-1}\ \pi^{-1/2}\ \Gamma(z)\, \Gamma \left (z+\frac 12\right ) \quad\text{  -- the  Legendre formula. }
\end{aligned}
\end{equation*}
{\ Our result partially confirms the Conjecture made after \cite[Theorem 1.1]{GV}.  In addition, the fact that any function of the described form is a solution of Yamabe problem was first noted in \cite{GV2} in the setting of groups of Heisenberg type. Of course, this class of groups is much wider than the class of groups of Iwasawa type.
\begin{rmrk}
\label{rem111} {\ With the left invariant basis of Theorem~\ref{t:FS} the quaternionic Heisenberg group
 is not a group of Heisenberg type. If we consider it
as a group of Heisenberg type then the best constant in the $L^2$ Folland-Stein embedding theorem is, cf.
\cite[Theorem 1.6]{GV},
\begin{equation*}
S_2\ =\ \frac{1}{\sqrt{4n(4n + 4)}}\ 4^{3/(4n+6)}\ \pi^{-(4n+3)/2(4n+6)}\ \left(\frac{\Gamma(4n + 3)}{\Gamma((4n + 3)/2)}\right)^{1/(4n+6)} .
\end{equation*}
\noindent and extremals are given by dilations and translations of the function
\begin{equation*}
F(q,\omega)\ =\ \gamma\ \left[(1 + |q|^2)^2\ +\ 16 |\omega|^2)\right]^{-(n+1)}, \ (q,\omega)\in \QH.
\end{equation*}
}
\end{rmrk}
It is worth pointing that studying the Yamabe extremals in the sub-Riemannian setting has applications to sharp inequalities in the Euclidean setting. For example, in the paper \cite{Va3} are determined the extremals of some  \textit{Euclidean} Hardy-Sobolev inequalities  involving the distance to a $n-k$ dimensional coordinate subspace of $\Rn$. This is  achieved  by relating extremals on the Heisenberg groups to extremals in the Euclidean setting.  In the particular case when $k=n$  one obtains the Caffarelli-Kohn-Nirenberg
inequality, see \cite{CKN}, for which  the optimal constant was found in
\cite{GY}.

\begin{conv}
\label{conven} We use the following conventions:
\begin{itemize}
\item the abbreviation \textit{qc} will stand for quaternionic contact;
\item  $\QH$ will denote the qc Heisenberg group;
\item $\tilde\eta$ will denote the standard qc form on the unit sphere $S^{4n+3}$, see
\eqref{e:stand cont form on S}. Note that this form is actually twice the 3-Sasakain qc form on $S^{4n+3}$;
\item $Vol_{\eta}$ will denote the volume form determined by the qc form $\eta$, thus $Vol_{\eta}=\eta_1\wedge\eta_2\wedge\eta_3\wedge (\omega_1)^{2n}$, see \cite[Chapter 8]{IMV}.
\end{itemize}
\end{conv}

{\bf Acknowledgments.} Research was partially supported by
Contract ``Idei", DO 02-257/18.12.2008 and DID 02-39/21.12.2009.
S.I. and I. M. are partially supported by the Contract 082/2009
with the University of Sofia `St.Kl.Ohridski'.

\section{The model quaternionic contact structures}
In this section we review the standard quaternionic contact structure on the quaternionic  Heisenberg group and the
$4n+3$-dimensional unit sphere. We will rely heavily on \cite{IMV}, but prefer to repeat some key points in
order to make the current paper somewhat self-contained. Besides serving as a background, this section will supply some key numerical constants - the {qc} scalar curvature and the first eigenvalue of the {sub-laplacian of the standard qc form of the sphere}.  This will be achieved using the conformal {sub-}laplacian and the properties of the Cayley transform.

First let us recall  the quaternionic Heisenberg group \cite[Section 5.2]{IMV}. We remind the following model of
the quaternionic Heisenberg group  $\QH$. Define $\QH \ =\ \Hn\times\text {Im}\, \mathbb{H}$ with the group law
given by $
 ( q', \omega')\ =\ (q_o, \omega_o)\circ(q, \omega)\ =\ (q_o\ +\ q, \omega\ +\ \omega_o\ + \ 2\ \text
{Im}\  q_o\, \bar q),
$ where $q,\ q_o\in\Hn$ and $\omega, \omega_o\in \text {Im}\, \mathbb{H}$. In
coordinates, with $\omega=ix+jy+kz$ and $q_\alpha=t_\alpha+ ix_\alpha +jy_\alpha +kz_\alpha$, $\alpha=1,\dots n$, a basis of left invariant horizontal vector fields $T_{\alpha},
X_{\alpha}=I_1T_{\alpha},Y_{\alpha}=I_2T_{\alpha},Z_{\alpha}=I_3T_{\alpha}, \alpha =1\dots,n$ is given by
\begin{equation*}
\begin{aligned}
T_{\alpha}\  =\  \dta {}\ +2x_{\alpha}\dx {}+2y_{\alpha}\dy {}+2z_{\alpha}\dz {} \,\qquad
  X_{\alpha}\  =\  \dxa {}\ -2t_{\alpha}\dx {}-2z_{\alpha}\dy
{}+2y_{\alpha}\dz {} \,\\
Y_{\alpha}\  =\  \dya {}\ +2z_{\alpha}\dx {}-2t_{\alpha}\dy {}-2x_{\alpha}\dz {}\,\qquad Z_{\alpha}\  =\  \dza
{}\ -2y_{\alpha}\dx {}+2x_{\alpha}\dy {}-2t_{\alpha}\dz {}\,.
\end{aligned}
\end{equation*}
The above vectors generate the \emph{horizontal space}, denoted as usual by $H$.  In addition,  by declaring them to be an orthonormal basis we obtain a metric on the horizontal space, which is the so called \emph{horizontal metric}.
\noindent The central (vertical) vector fields $\xi_1,\xi_2,\xi_3$ are described as
follows\\
\centerline{$\xi_1=2\dx {}\,\quad \xi_2=2\dy {}\,\quad \xi_3=2\dz {}\,.$}
The standard quaterninic contact form, written as a purely imaginary quaternion valued form $\tilde\Theta=i\tilde\Theta_1
+ j\tilde\Theta_2 +k\tilde\Theta_3)$, is
\begin{equation*}
2\tilde\Theta\  =\ \ d\omega \ - \ q' \cdot d\bar q' \ + \ dq'\, \cdot\bar q',
\end{equation*}
where $\cdot$ denotes the quaternion multiplication. {The Biquard connection coincides with the flat left invariant connection on $\QH$, in particular the qc scalar curvature vanishes.}

Following \cite{IMV}, we give another model of the Heiseneberggroup, which is the one we will use in this paper.
Let us identify $\QH$ with the boundary $\Sigma$ of a Siegel domain in $\Hn\times\mathbb{H}$,
\[
\Sigma\ =\ \{ (q',p')\in \Hn\times\mathbb{H}\ :\ \Re {\ p'}\ =\ \abs{q'}^2 \},
\]
by using the map $(q', \omega')\mapsto (q',\abs{q'}^2 - \omega')$. Since \hspace{3mm}
$dp'\ =\ q'\cdot d\bar q'\ +\ dq'\, \cdot\bar {q}'\ -\ d\omega',
$ 
\hspace{3mm} under the identification  of $\QH$ with $\Sigma$ we have also \hspace{3mm}
$2\tilde{\Theta}\ =\ - dp'\ +\ 2dq'\cdot\bar {q}'.
$ 
\hspace{3mm} Taking into account that $\tilde{\Theta}$ is purely imaginary, the last equation can be written
also in the following form
\[
4\,\tilde{\Theta}\ =\ (d\bar p'\ -\ d p')\ +\ 2dq'\cdot\bar {q'}\ -\ 2q'\cdot d\bar q'.
\]
Now, consider the Cayley transform, see \cite{Ko1} and \cite{CDKR}, as the map  $\ \mathcal{C}:S\mapsto \Sigma\ $ from the
sphere $S\ =\ \{\abs{q}^2+\abs{p}^2=1 \}\subset \Hn\times\mathbb{H}$ minus a point to the Heisenberg group $
\Sigma$, with $\mathcal{C}$ defined by
\[
 (q', p')\ =\ \mathcal{C}\ \Big ((q, p)\Big), \qquad
q'\ =\ (1+p)^{-1} \ q, \qquad p'\ =\ (1+p)^{-1} \ (1-p)
\]
\noindent and with an inverse map $(q, p)\ =\ \mathcal{C}^{-1}\Big ((q', p')\Big)$ given by
\[
q\ =\ \ 2(1+p')^{-1} \ q', \qquad  p\ =\ (1+p')^{-1} \ (1-p').
\]
\noindent The Cayley transform maps $S^{4n+3}\setminus \{(-1,0) \}$, $(-1,0)\in \Hn\times\mathbb{H}$, to
$\Sigma$ since
$$
\Re {\ p'}\ =\ \Re { \frac {(1+\bar p) (1-p)} {\abs {1+p\,}^2}
 }
 \ =\ \Re { \frac {1- \abs  {p} } {\abs
{1+ p\,}^2} }\ =\ \frac {\abs{q}^2}{\abs {1+p\,}^2}\ =\ \abs {q'}^2.
$$
Writing the Cayley transform in the form \hspace{3mm} $ (1+p)q'\ =\  \ q, \quad (1+p)p'\ =\  1-p, $ \hspace{3mm}
gives
\[
dp\cdot q'\ +\ (1+p)\cdot dq'\ =\ d q, \hskip.5truein dp\cdot p'\ +\ (1+p)\cdot dp'\ =\ -dp,
\]
from where we find
\begin{equation*}
\begin{aligned}
dp'\ & =\ -2(1+p)^{-1}\cdot dp \cdot (1+p)^{-1}\\   dq' \ & =\ (1+p)^{-1}\cdot [ dq\ -\ dp\cdot (1+p)^{-1}\cdot
q ].
\end{aligned}
\end{equation*}
\noindent The Cayley transform is a conformal\ quaternionic contact diffeomorphism between the quaternionic
Heisenberg group with its standard quaternionic contact structure $\tilde\Theta$ and the sphere minus a point
with its standard structure $\tilde\eta$.  In fact, by \cite[Section 8.3]{IMV} we have
\begin{equation*}
\Theta\ \overset{def}{=}\ \lambda\ \cdot (\mathcal{C}^{-1})^*\, \tilde\eta\ \cdot \bar\lambda\ =\ \frac
{8}{\abs{1+p'\, }^2}\, \tilde\Theta.
\end{equation*}
\noindent where $\lambda\ = {\abs {1+p\,}}\, {(1+p)^{-1}}$ is a unit quaternion and $\tilde\eta$ is the standard
contact form on the sphere,
\begin{equation}\label{e:stand cont form on S}
\tilde\eta\ =\ dq\cdot \bar q\ +\ dp\cdot \bar p\ -\ q\cdot d\bar q -\ p\cdot d\bar p.
\end{equation}

\begin{lemma}\label{l:scal of sphere}
The qc scalar curvature $\tilde S$ of the standard qc structure \eqref{e:stand cont form on S} on $S^{4n+3}$ is
\begin{equation}\label{e:Scal of st sphere}
\tilde S\ =\ \frac 12 (Q+2)(Q-6)=8n(n+2).
\end{equation}
\end{lemma}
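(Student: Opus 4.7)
My plan is to extract $\tilde{S}$ from the conformal identity $\Theta = (8/|1+p'|^2)\tilde{\Theta}$ combined with the vanishing of the qc scalar curvature of the flat model $\tilde{\Theta}$ on $\QH$. Since $\lambda$ is a unit quaternion, the quaternionic rotation $\Theta = \lambda\cdot(\mathcal{C}^{-1})^*\tilde{\eta}\cdot\bar{\lambda}$ determines the same qc structure as $(\mathcal{C}^{-1})^*\tilde{\eta}$; hence the qc scalar curvature of $\Theta$ equals $\tilde{S}$, the (necessarily constant) qc scalar curvature of the round sphere. Writing $\Theta = u^{4/(Q-2)}\tilde{\Theta}$, noting that $(Q-2)/4 = n+1$, and using the identification $(q',\omega')\mapsto(q',|q'|^2-\omega')$ which gives $|1+p'|^2 = (1+|q|^2)^2 + |\omega|^2 =: R$, I have $u = 8^{n+1}R^{-(n+1)}$. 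The qc Yamabe equation \eqref{e:conf Yamabe} applied with background $\tilde{\Theta}$ (for which $S=0$) then reads
\[
4\frac{Q+2}{Q-2}\,\triangle u \;=\; -\tilde{S}\,u^{2^*-1}.
\]

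The crux is to compute $\triangle u$ directly; equivalently, compute $\triangle F$ where $F = R^{-(n+1)}$. Applying the explicit basis $T_{\alpha}, X_{\alpha}, Y_{\alpha}, Z_{\alpha}$ from the excerpt and setting $A = 1+|q|^2$, a direct differentiation gives
\[
T_{\alpha}R \;=\; 4(At_{\alpha}+xx_{\alpha}+yy_{\alpha}+zz_{\alpha}),
\]
with analogous expressions for $X_{\alpha}R, Y_{\alpha}R, Z_{\alpha}R$. The four components for each fixed $\alpha$ are precisely the real, $i$-, $j$- and $k$-parts of the quaternion product $(A-\omega)\,q_{\alpha}$. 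This recognition collapses the gradient-squared sum:
\[
\sum_{\alpha}\bigl[(T_{\alpha}R)^2+(X_{\alpha}R)^2+(Y_{\alpha}R)^2+(Z_{\alpha}R)^2\bigr] \;=\; 16\sum_{\alpha}|(A-\omega)q_{\alpha}|^2 \;=\; 16\,R\,(A-1).
\]
A similarly elementary second-derivative bookkeeping (using $T_{\alpha}^2 x_{\beta} = 0$ etc.\ and that $T_{\alpha}^2+X_{\alpha}^2+Y_{\alpha}^2+Z_{\alpha}^2$ acts on each coordinate $t_{\alpha}, x_{\alpha}, y_{\alpha}, z_{\alpha}$ and on $A$ symmetrically) yields $\triangle R = 16(n+2)\,A - 32$.

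Feeding these into the chain rule
\[
\triangle F \;=\; -(n+1)R^{-(n+2)}\triangle R + (n+1)(n+2)R^{-(n+3)}\!\!\sum_{\alpha}\bigl[(T_{\alpha}R)^2+\cdots+(Z_{\alpha}R)^2\bigr],
\]
the $A$-dependent pieces cancel and I obtain $\triangle F = -16n(n+1)\,R^{-(n+2)}$. Since $2^*-1 = (Q+2)/(Q-2) = (n+2)/(n+1)$, one has $R^{-(n+2)} = F^{\,2^*-1}$, hence $\triangle F = -16n(n+1)F^{\,2^*-1}$. Substituting $u = 8^{n+1}F$ into the Yamabe equation, the constants align (the scaling $u^{2^*-1} = 8^{n+2}F^{\,2^*-1}$ absorbs one factor of $8$) and the identity reduces to $64\,n(n+2) = 8\,\tilde{S}$, giving $\tilde{S} = 8n(n+2) = \tfrac{1}{2}(Q+2)(Q-6)$ as claimed.

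The only real obstacle is the algebraic cancellation inside $\triangle F$. It is made tractable by the quaternion-product identification of the horizontal gradient of $R$; this structure is not accidental, as $F$ is precisely the extremal function for the Yamabe functional on $\QH$, and the Cayley transform is exactly what makes this geometry visible.
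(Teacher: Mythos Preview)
Your proof is correct and follows essentially the same route as the paper: both identify the qc scalar curvature of $\Theta$ with $\tilde S$ via the Cayley transform, compute the horizontal sub-laplacian of the conformal factor $\Phi=8^{n+1}R^{-(n+1)}$ on the flat Heisenberg group, and read off $\tilde S$ from the resulting Yamabe equation. The only difference is presentational: the paper first computes $\triangle h$ with $h=R/16$ (citing \cite{IMV}) and then invokes the transformation law for the conformal sub-laplacian, whereas you compute $\triangle F$ directly via the chain rule and plug into \eqref{e:conf Yamabe}; your quaternion-product identification $\tfrac14(T_\alpha R,X_\alpha R,Y_\alpha R,Z_\alpha R)\leftrightarrow (A-\omega)q_\alpha$ is a clean way to package the gradient computation that the paper leaves implicit.
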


\begin{rmrk}\label{r:standctct form}
Notice that the standard qc contact form we consider here is twice the 3-Sasakian form on $S^{4n+3}$, {which has qc scalar curvature equal to 16n(n+2) \cite{IMV}}.
\end{rmrk}

\begin{proof}
Let us introduce the functions
\begin{equation}
\begin{aligned}\label{e:h and Phi}
 h& =\frac  {1}{16}|1+p'|^2=\frac {1}{16}\left [(1+|q'|^2)^2+
 |\omega'|^2 \right ], \qquad (q',p')\in \Sigma\subset \Hn\times\mathbb{H}, \quad p'=|q'|^2+ \omega',\\
& \text{and}\\
\Phi & =\left({2h} \right)^{-(Q-2)/4} = 8^{(Q-2)/4}\ {\left [(1+|q'|^2)^2+ |\omega'|^2 \right ]}^{-(Q-2)/4},
\end{aligned}
\end{equation}
so that now we have $$\Theta=\frac {1}{2h}\tilde\Theta =\Phi^{4/(Q-2)}\tilde\Theta.$$ With the help of
\cite[Section 5.2]{IMV}  a small calculation shows that the sub-laplacian of $h$ w.r.t. $\tilde\Theta$ is given
by $\triangle h = \frac {Q-6}{4} + \frac {Q+2}{4}|q'|^2$ and thus  $\Phi$ is a solution of the qc Yamabe equation
on the  Heisenberg group $\Sigma$
\begin{equation}\label{e:Yamabe for Phi}
\triangle \Phi = -K\, \Phi^{2^*-1}, \qquad K=(Q-2)(Q-6)/8,
\end{equation}
where $\triangle$ is the sub-laplacian on the quaternionic Heisenberg group. Denoting with $\lap$ and $\tilde \lap$ the
conformal sub-laplacians of $\Theta$ and $\tilde\Theta$, respectively, we have
$$\Phi^{-1}\lap (\Phi^{-1}u) = \Phi^{-2^*}\tilde \lap u.$$ We remind, cf. \cite{Biq1} and \cite{IMV1}, that  for a qc contact form $\Theta$ the conformal sublaplacian is,
\begin{equation*}
\lap = a\triangle_{\Theta} - S_{\Theta}, \qquad a=4\frac {Q+2}{Q-2},
\end{equation*}
where $\triangle_{\Theta}$ is the sub-laplacian  associated to $\Theta$, i.e., $\triangle_{\Theta} u=tr (\nabla^{\Theta}d u)$--the horizontal trace of the Hessian of $u$, using the Biquard connection $\nabla^{\Theta}$ of $\Theta$, and $ S_{\Theta}$ is the qc scalar curvature of $\Theta$.
Thus, letting $u=\Phi$ we come to $\lap ( 1 ) = \Phi^{1-2^*}\tilde \lap \Phi$, which shows $- S_{\Theta}=-4\frac {Q+2}{Q-2}K$. The latter is the same as  that of $\tilde \eta$ since the two structures are isomorphic via the diffemorphism $\mathcal{C}$, or rather its extension, since we can consider $\mathcal{C}$ as a quaternionic contact conformal transformation between the whole sphere $ S^{4n+3}$  and the compactification $\hat{\Sigma}\cup {\infty}$ of the quaternionic Heisenberg group by adding the point at infinity, cf. \cite[Section 5.2]{IMV1}.
\end{proof}

We turn to the task of determining the first eigenvalue of the
sub-laplacian on $S^{4n+3}$. In fact, we shall need only the fact
that the restriction of every coordinate function  is an
eigenvalue.  The proof of this fact can be seen directly without
any reference to the Biquard connection, but this will require
setting a lot of notation, so we prefer to use a result from
\cite{IMV}.
\begin{lemma}
If $\zeta$ is any of the (real) coordinate functions in $\mathbb{R}^{4n+4}=\Hn\times\mathbb{H}$, then
\begin{equation}\label{e:eigen functions}
\tilde \triangle \zeta = -\lambda_1 \zeta, \quad \lambda_1=\frac {\tilde S}{Q+2}=  {2n}
\end{equation}
for the horizontal trace of the Hessian, where $\tilde \triangle$ is the sub-laplacian of the standard qc form $\tilde \eta$  of $S^{4n+3}$.
\end{lemma}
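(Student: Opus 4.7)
The plan is to transfer the problem from $S^{4n+3}$ to the quaternionic Heisenberg group $\Sigma$ via the Cayley transform $\mathcal C$, where the horizontal distribution and the sub-Laplacian are known explicitly, and to exploit the same conformal covariance identity
\[
a\,\triangle(\Phi\, v)\;=\;\Phi^{2^{*}-1}\bigl(\tilde{\mathcal L}\zeta\bigr)\circ \mathcal C^{-1},\qquad v\;=\;\zeta\circ\mathcal C^{-1},
\]
already used in the proof of Lemma~\ref{l:scal of sphere}. Since $\tilde{\mathcal L}=a\tilde\triangle-\tilde S$ with $\tilde S=8n(n+2)$ and $a=4(Q+2)/(Q-2)$, an eigenvalue equation for $\tilde{\mathcal L}\zeta$ coming from the displayed identity will translate into one for $\tilde\triangle\zeta$.

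First I would pull back each coordinate function $\zeta$ of $\mathbb{R}^{4n+4}=\mathbb{H}^n\times\mathbb{H}$. Rewriting the Cayley transform as $1+p=2(1+p')^{-1}$ and $q=(1+p)q'$, and using $p'=|q'|^{2}+\omega'$, each $v=\zeta\circ\mathcal C^{-1}$ is found to be a quaternion polynomial in $q',\omega'$ divided by $16h$; concretely, $\operatorname{Re}(p)\circ\mathcal C^{-1}=(1-|q'|^{4}-|\omega'|^{2})/(16h)$, the three imaginary components of $p$ pull back to $-x'/(8h),\,-y'/(8h),\,-z'/(8h)$, and the $4n$ components of $q\circ\mathcal C^{-1}$ have an analogous rational form, linear in $q'$.

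Second, I would compute $\triangle(\Phi v)$ on $\QH$ directly from the horizontal basis $T_\alpha,X_\alpha,Y_\alpha,Z_\alpha$ exhibited in the excerpt, via the product rule
\[
\triangle(\Phi v)\;=\; v\,\triangle\Phi\;+\;2\,\langle\nabla\Phi,\nabla v\rangle\;+\;\Phi\,\triangle v,
\]
and replacing the first term by $-K\,\Phi^{2^{*}-1}\,v$ using \eqref{e:Yamabe for Phi}. Because $\Phi$ is a pure power of $h$ and each $v$ has the structured form above, after a mechanical but somewhat tedious calculation the gradient pairing and $\Phi\,\triangle v$ collapse to a single constant multiple of $\Phi^{2^{*}-1}\,v$. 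Reading off that constant and plugging back into the covariance identity produces $\tilde{\mathcal L}\zeta=\mu\,\zeta$ for an explicit $\mu$, and then $\tilde{\mathcal L}=a\tilde\triangle-\tilde S$ with the values of $\tilde S$ and $a$ above yields $\tilde\triangle\zeta=-\lambda_{1}\zeta$ with $\lambda_{1}=\tilde S/(Q+2)=2n$.

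The main obstacle is the bookkeeping in the second step: the horizontal fields mix $t_\alpha,x_\alpha,y_\alpha,z_\alpha$ through vertical correction terms, and the numerators of $v$ are quaternion-valued, so keeping track of signs and quaternion ordering requires care. A pleasant shortcut that removes the need to repeat the calculation for all $4n+4$ coordinates is to note that $\tilde\triangle$ commutes with the $\mathrm{Sp}(n+1)\mathrm{Sp}(1)$-symmetry of $\tilde\eta$, which acts real-irreducibly on the span of linear coordinate functions; hence $\tilde\triangle$ restricts to this subspace as multiplication by a single real scalar, and it suffices to verify the eigenvalue identity for one conveniently chosen coordinate such as $\operatorname{Re}(p)$.
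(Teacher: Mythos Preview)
Your approach is correct but genuinely different from the paper's. The paper works intrinsically on the sphere: it observes that each coordinate function $\zeta$ is the real part of an anti-regular function on $\mathbb{H}^{n+1}$, hence quaternionic pluri-harmonic, and then invokes \cite[Corollary~6.24 and Theorem~6.20]{IMV} for the 3-Sasakian sphere to conclude $tr(\nabla d\zeta)=-4n\zeta$; rescaling by the factor $2$ between the 3-Sasakian form and $\tilde\eta$ gives $\tilde\triangle\zeta=-2n\zeta$. Thus the paper's argument is short and geometric but leans on nontrivial external results about anti-CRF functions and the Biquard connection on 3-Sasakian manifolds.

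Your route instead stays entirely inside the analytic toolkit already assembled in the paper (Cayley transform, the Yamabe solution $\Phi$, conformal covariance of the sub-Laplacian) and reduces everything to an explicit Heisenberg computation. This is more self-contained and avoids quoting \cite{IMV}, at the price of a somewhat laborious calculation of $\triangle(\Phi v)$. Your use of the $\mathrm{Sp}(n+1)\mathrm{Sp}(1)$-symmetry and real irreducibility of the linear coordinates to reduce to a single $\zeta$ is a clean way to cut that labor down; the paper makes the analogous observation only informally (``it will be enough to see it for one coordinate function provided the sub-laplacian on the sphere is rotation invariant''). One small notational caveat: in the paper $\tilde{\mathcal L}$ denotes the conformal sub-Laplacian of $\tilde\Theta$ on the group (which is just $a\triangle$), whereas you use the same symbol for the conformal sub-Laplacian of $\tilde\eta$ on the sphere; you should disambiguate to avoid confusion with Lemma~\ref{l:scal of sphere}.
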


\begin{proof}
It is enough to furnish a proof for the sub-laplacian on the 3-Sasakain sphere since the two qc forms defer by a
constant. We can see that every $\zeta$ of the considered type is an eigenfunction by using \cite[Corollary
6.24]{IMV}. It will be enough to see it for one coordinate function provided the sub-laplacian on the sphere is
rotation invariant. Thus, let us  take $\zeta=t_1$. Notice that $\zeta$ is quaternionic pluri-harmonic
\cite[Definition 6.7]{IMV} since it is the real part of the anti-regular  function $t_1+ix_1 - jy_1-kz_1$. So,
its restriction to the 3-Sasakain sphere is the real part of an anti-CRF function. Therefore we  apply
\cite[Corollary 6.24]{IMV} which gives $tr(\nabla d\zeta) = 4\lambda n$ for the sub-laplacian of the 3-sasakain qc structure on the sphere. Next, we compute $\lambda$, which can
be found in \cite[Theorem 6.20]{IMV}. Using that the sphere is 3-Sasakian it follows the Reeb vector fields are
obtained from the outward pointing unit normal vector $N$ as follows, $\xi_1=i N$, $\xi_2=jN$ and $\xi_3=k N$,
where for a point on the sphere we have $N(q)=q\in \mathbb{H}^{n+1}$. Therefore $\lambda=-t_1=-\zeta$.  To see
this easier notice that only the first four coordinates of $N$ matter. So, if we assume $n=0$ we have
$iN=-x+it+ky-jz$, $jN=-y+iz+jt-kx$ and $kN=-z-iy+jx+kt$, so we need to sum the \emph{real dot product }of these
vectors with $i$, $j$ and $k$, respectively, which gives $-t$. Thus, for the sub-laplacian on the 3-Sasakian sphere we have
\begin{equation*}
tr(\nabla d\zeta) = -4n\zeta,
\end{equation*}
where $\zeta$ is the restriction any of the coordinate functions of $\mathbb{R}^{4n+4}=\Hn\times\mathbb{H}$.
Since the qc contact form $\tilde\Theta$ is {twice} the 3-Sasakain qc contact form on the sphere it follows $\tilde
\triangle$ is $1/2$ of the 3-Sasakain sub-laplacian. Thus
\begin{equation*}
\tilde\triangle = -2n\zeta,
\end{equation*}
which shows $\lambda_1=2n=\frac 12(Q-6)= {\tilde S}/{(Q+2)}$.
\end{proof}

We finish this section with a simple Lemma which will be used to
relate the various explicit constants.  Its claim also  follows
from the conformal invariance of the Yamabe equation, but we
prefer to give a proof, which is independent of the notion of qc
scalar curvature. We recall, see \cite[Chapter 8]{IMV}, that $Vol_{\eta}$ will denote the
volume form determined by the qc form $\eta$, thus
$Vol_{\eta}=\eta_1\wedge\eta_2\wedge\eta_3\wedge(\omega_1)^{2n}$. Also, for a qc form $\eta$ we let $|\nabla^{\eta} F|^2= \sum_{\alpha=1}^{4n} |dF(e_\alpha)|^2$  be the square of the length of the horizontal gradient of {a function} $F$ taken with respect to an orthonormal basis of the horizontal space $H= Ker \, \eta$ and {the} metric determined by  $\eta$.
\begin{lemma}\label{l:invariance}
Let $F\in\domG$, cf. \eqref{e:sobolev space}, be a positive
function with $\int_{\bG}\ F^{2^*} Vol_{\tilde\Theta}=1$. Then we have
\begin{equation}\label{e:invariance of dirichlet 2}
\int_{\bG}\ a|\nabla^{\tilde\Theta} F|^2\ Vol_{\tilde\Theta}\ =\ \int_{S^{4n+3}}\  \left ( a|\nabla^{\tilde\eta} g|^2 + \tilde S g^2\right )\ Vol_{\tilde\eta},\qquad a=4(2^*-1),
\end{equation}
and $$\int_{\bG}\ g^{2^*} Vol_{\tilde\eta}=1,$$
where
\begin{equation}\label{e:g def}
g=\mathcal{C}^*(F\Phi^{-1}),
\end{equation}
and, as
before, $\mathcal{C}:S^{4n+3}\rightarrow \Sigma$ is the Cayley transform, $\Theta =\Phi^{4/(Q-2)}\tilde\Theta$, cf. \eqref{e:h and Phi}.
\end{lemma}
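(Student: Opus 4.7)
\emph{Proof plan.} The identity is a manifestation of the conformal covariance of the qc Yamabe functional under the Cayley transform, which realizes the standard qc sphere as a conformal deformation of the flat quaternionic Heisenberg group via $\Theta=\Phi^{4/(Q-2)}\tilde\Theta$. The main inputs, all already available, are: the conformal transformation law $\lap(\Phi^{-1}u)=\Phi^{1-2^*}\tilde\lap u$ (equivalent to the identity derived in the proof of Lemma~\ref{l:scal of sphere}); the vanishing of the qc scalar curvature of $\tilde\Theta$, so that $\tilde\lap=a\triangle_{\tilde\Theta}$; the equality $S_\Theta=\tilde S$ from the same lemma; and the volume-form identity $Vol_\Theta=\Phi^{2^*}Vol_{\tilde\Theta}$, which follows from $2Q/(Q-2)=2^*$ together with the scalings of the triple contact form and the horizontal metric under a conformal deformation of qc type.

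\textbf{Step 1 ($L^{2^*}$-normalization).} The relation $\Theta=\lambda\cdot(\mathcal{C}^{-1})^*\tilde\eta\cdot\bar\lambda$ shows that $\Theta$ differs from the pullback of $\tilde\eta$ only by the pointwise $Sp(1)$-action of the unit quaternion $\lambda$, which rotates the triple $(\eta_1,\eta_2,\eta_3)$ by $SO(3)$ while preserving the horizontal distribution and the horizontal metric. Since $\eta_1\wedge\eta_2\wedge\eta_3$ is $SO(3)$-invariant and $(\omega_1)^{2n}$ equals $(2n)!$ times the Riemannian volume of $H$ independently of the compatible almost complex structure, the qc volume form is $Sp(1)$-invariant, giving $\mathcal{C}^*Vol_\Theta=Vol_{\tilde\eta}$. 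Combined with $Vol_\Theta=\Phi^{2^*}Vol_{\tilde\Theta}$, the change of variable
$$\int_{S^{4n+3}}g^{2^*}\,Vol_{\tilde\eta}\ =\ \int_{\bG}(F\Phi^{-1})^{2^*}\Phi^{2^*}\,Vol_{\tilde\Theta}\ =\ \int_{\bG}F^{2^*}\,Vol_{\tilde\Theta}\ =\ 1$$
is immediate.

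\textbf{Step 2 (Dirichlet identity for smooth $F$).} Assume first that $F\in C_0^\infty(\bG)$. Integration by parts on the compact sphere gives
$$\int_{S^{4n+3}}\bigl(a|\nabla^{\tilde\eta}g|^2+\tilde S\,g^2\bigr)Vol_{\tilde\eta}\ =\ -\int_{S^{4n+3}}g\,\mathcal{L}_{\tilde\eta}g\,Vol_{\tilde\eta},$$
where $\mathcal{L}_{\tilde\eta}:=a\triangle_{\tilde\eta}-\tilde S$ is the conformal sub-Laplacian of $\tilde\eta$. Because $\tilde\eta$ and $\mathcal{C}^*\Theta$ define the same qc structure on $S^{4n+3}\setminus\{(-1,0)\}$ (they differ only by the $Sp(1)$-rotation $\lambda$), they share the same Biquard connection and conformal sub-Laplacian, so $\mathcal{L}_{\tilde\eta}g=\mathcal{C}^*\bigl(\lap(F\Phi^{-1})\bigr)$. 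Plugging $u=F$ into the transformation law and using $\tilde\lap F=a\triangle_{\tilde\Theta}F$ yields $\mathcal{L}_{\tilde\eta}g=\mathcal{C}^*\bigl(\Phi^{1-2^*}\,a\triangle_{\tilde\Theta}F\bigr)$. Multiplying by $g=\mathcal{C}^*(F\Phi^{-1})$ and by $Vol_{\tilde\eta}=\mathcal{C}^*(\Phi^{2^*}Vol_{\tilde\Theta})$, the powers of $\Phi$ collapse exactly, since $\Phi^{-1}\cdot\Phi^{1-2^*}\cdot\Phi^{2^*}=1$, producing
$$g\,\mathcal{L}_{\tilde\eta}g\,Vol_{\tilde\eta}\ =\ \mathcal{C}^*\bigl(aF\triangle_{\tilde\Theta}F\,Vol_{\tilde\Theta}\bigr).$$
Integrating and changing variables converts the right-hand side into $\int_{\bG}aF\triangle_{\tilde\Theta}F\,Vol_{\tilde\Theta}$; a final integration by parts on $\bG$, legitimate because $F\in C_0^\infty(\bG)$, produces $-\int_{\bG}a|\nabla^{\tilde\Theta}F|^2\,Vol_{\tilde\Theta}$, establishing \eqref{e:invariance of dirichlet 2} in the smooth case.

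\textbf{Step 3 (Extension to $\domG$).} The identity extends from $C_0^\infty(\bG)$ to all of $\domG$ by density and continuity of both sides in the $\domG$-norm of $F$. The main technical obstacle resides here: one must verify that the map $F\mapsto g=\mathcal{C}^*(F\Phi^{-1})$ is continuous from $\domG$ into the Sobolev space on $S^{4n+3}$ appearing on the right of \eqref{e:invariance of dirichlet 2}, despite the blow-up of $\Phi^{-1}$ at the excluded point $(-1,0)$. This is handled by a standard cut-off argument exploiting the smoothness of $\mathcal{C}$ away from $(-1,0)$ and the precise decay of $\Phi^{-1}$ there, in the spirit of \cite[Sec.~5.2]{IMV1}.
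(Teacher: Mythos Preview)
Your proof is correct and reaches the same identity, but the route differs from the paper's. The paper works entirely on the group side: writing $F=G\Phi$, it expands $|\nabla^{\tilde\Theta}(G\Phi)|^2$, integrates by parts once, and then invokes the explicit Yamabe equation \eqref{e:Yamabe for Phi} for the conformal factor, $\triangle_{\tilde\Theta}\Phi=-K\Phi^{2^*-1}$, to obtain
\[
\int_{\bG}|\nabla^{\tilde\Theta}F|^2\,Vol_{\tilde\Theta}=\int_{\bG}\bigl(\Phi^2|\nabla^{\tilde\Theta}G|^2+KG^2\Phi^{2^*}\bigr)\,Vol_{\tilde\Theta},
\]
after which only the conformal scaling of the horizontal gradient norm and the relation $\tilde S=aK$ are needed to pass to the sphere. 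You instead package the computation in the conformal covariance law $\lap(\Phi^{-1}u)=\Phi^{1-2^*}\tilde\lap u$ for the conformal sub-Laplacian and integrate by parts on both the sphere and the group. Your argument is the more conceptual one and makes the role of the conformal sub-Laplacian transparent; the paper's computation is deliberately more elementary, avoiding any appeal to the covariance formalism (as the authors remark, they want a proof ``independent of the notion of qc scalar curvature'' beyond the constant $K$). Note also that your Step~3 is more scrupulous than the paper, which does not explicitly treat the passage from smooth compactly supported $F$ to general $F\in\domG$; your observation that $g$ extends by zero across $(-1,0)$ for $F\in C_0^\infty(\bG)$ is what makes the sphere-side integration by parts legitimate.
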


\begin{rmrk}\label{r:volume form vs lebesgue}
Notice that $Vol_{\tilde\Theta}=2^{-3}\,(2n)!\, dH$, where $dH$ is the Lebesgue measure in $\mathbb{R}^{4n+3}$, which is a Haar measure on the group.
\end{rmrk}

\begin{proof}
It will be convenient for the remaining of
this proof to denote by small letters the pull-back by the Cayley transform of a function denoted with the
corresponding capital letter.  Thus, $f=\mathcal{C}^*F=F\circ \mathcal{C}$, $\phi=\mathcal{C}^*(\Phi)$ and $g=f\phi^{-1}$.
By the conformality of the qc structures on the group and the sphere we have
\begin{equation}\label{e:Cayley volumes}
Vol_{\Theta}=\Phi^{2^*}  Vol_{\tilde\Theta}
\end{equation}
By \eqref{e:Cayley volumes} we have $F^{2^*} Vol_{\tilde\Theta} = f^{2^*}
\phi^{-2^*}  Vol_{\tilde\eta}$, which motivates the definition \eqref{e:g def} of the function $g$
which is defined on the sphere and should be regarded as corresponding to the function $F$. Thus, we have for
example $F=G\Phi$. By definition we have $$\int_{\bG}\ g^{2^*} Vol_{\tilde\eta}=1,$$ so our next task is to see
that the Yamabe integral is preserved
\begin{equation}\label{e:invariance of dirichlet}
\int_{\bG}\ |\nabla^{\tilde\Theta} F|^2\ Vol_{\tilde\Theta}\ =\  \int_{S^{4n+3}}\ \left ( |\nabla^{\tilde\eta} g|^2 +K g^2  \right ) \ Vol_{\tilde\eta}.
\end{equation}
Here is where we shall exploit that a power of the  conformal factor of the Cayley transform is a solution of
the Yamabe equation.  Let $\left <\nabla^{\Theta} \Phi,\nabla^{\Theta} G \right > = \sum_{a=1}^{4n}(e_a \Phi)\, (\e_a G)$ where
$\{e_1,\dots,e_{4n}\}$ is an orthonormal basis of the horizontal space $H$. Using the divergence formula from
\cite[Section 8.1]{IMV} we find
\begin{multline*}
\int_{\bG}\ |\tilde\nabla^{\Theta} F|^2\ Vol_{\tilde\Theta} = \int_{\bG}\ |\nabla^{\tilde\Theta} (G\Phi)|^2\ Vol_{\tilde\Theta}
= \int_{\bG}\ \Big(G^2|\nabla^{\tilde\Theta} \Phi|^2 + \Phi^2|\nabla^{\tilde\Theta} G|^2 + \left <\Phi\nabla^{\tilde\Theta} \Phi,\nabla^{\tilde\Theta} G^2 \right >\Big) \ Vol_{\tilde\Theta}\\
= \int_{\bG}\ \Big( \Phi^2|\nabla^{\tilde\Theta} G|^2  - G^2\Phi \triangle_{\tilde\Theta} \Phi\ \Big) Vol_{\tilde\Theta}.
\end{multline*}
Now, the Yamabe equation \eqref{e:Yamabe for Phi} gives
\begin{multline*}
\int_{\bG}\ |\nabla^{\tilde\Theta} F|^2\ Vol_{\tilde\Theta} = \int_{\bG}\ \left( \Phi^2|\nabla^{\tilde\Theta} G|^2  +K G^2\Phi^{2^*}\ \right)Vol_{\tilde\Theta}\\
=\int_{S^{4n+3}}\ \left ( \phi^{2-2^*}(|\nabla^{\tilde\Theta} G|\circ \mathcal{C})^2  +K g^2 \ \right) Vol_{\tilde\eta}= \int_{S^{4n+3}}\ \left(|\nabla^{\tilde\eta} g|^2  +K g^2\ \right) Vol_{\tilde\eta},
\end{multline*}
taking into account that $\mathcal{C}$ is
a qc conformal map. Finally, a glance at \eqref{e:Yamabe for Phi} and \eqref{e:Scal of st sphere} shows  $\tilde S/K=4(2^*-1)=(4(Q+2)/(Q-2)$ which allows to put  \eqref{e:invariance of dirichlet} in the form \eqref{e:invariance of dirichlet 2}.
\end{proof}

\section{The best constant in the Folland-Stein inequality}
In this section, following \cite{FL}, we prove the main Theorem.
 It is important to observe that a suitable adaptation of the method of concentration of compactness due
 to P. L. Lions \cite{L3}, \cite{L4} allows to prove that in any Carnot group the Yamabe constant and optimal constant in the Folland-Stein inequality is achieved in the space $\domG$, see \cite{Va} and \cite{Va2}. Here $$
\domG =\overline{C^\infty_o(\QH)}^{||\cdot||_{\domG}}.$$ The  space $\domG$ is endowed with the  norm
\begin{equation}\label{e:sobolev space}
||u||_{\domG} \ =\ ||\, |\nabla u|\, ||_{L^{2^*}(\QH)} .
\end{equation}
where $\nabla u$ is the horizontal gradient of $u$ and $|\nabla u|^2=\sum_{a=1}^{4n} (e_au)^2$ for an orthonormal basis $\{e_1,\dots, e_{4n}\}$ of horizontal left invariant vector fields.

  In this regard an elementary, yet crucial
 observation, is that if $u$ is an entire solution to the Yamabe equation, then such are also the two functions
\begin{equation}\label{translation}
\tau_h u\ \overset{def}{=}\  u\circ \tau_h, \quad\quad \quad h\in \bG ,
\end{equation}
where $\tau_h:\bG\to \bG$ is the operator of left-translation $\tau_h(g) = hg$, and
\begin{equation}\label{scaling}
u_\lambda\ \overset{def}{=}\ \lambda^{(Q-2)/2}\ u\circ \delta_\lambda, \quad\quad\quad \lambda >0.
\end{equation}
The  Heisenberg dilations are defined  by $$\delta_\lambda\left ( (q', \omega')\right)=\left ( (\lambda q', \lambda^2 \omega')\right), \qquad (q', \omega')\in \QH$$
It is also well known, \cite{Va} and \cite{Va2}, that there are smooth positive minimizer of the Folland-Stein inequality on the quaternionic Heisenberg group $\QH$.  These facts will be used without further notice on regularity and existence.

We start with the "new" key, see  \cite{BFM} and \cite{FL}, allowing the ultimate solution of the considered problem.

\begin{lemma}\label{l:hersch}
For every $v\in L^1(S^{4n+3})$ with $\int_{S^{4n+3}} v \ Vol_{\tilde\eta}=1$ there is a quaternionic contact
conformal transformation $\psi$ such that $$\int_{S^{4n+3}} \psi\, v \ Vol_{\tilde\eta} =0.$$
\end{lemma}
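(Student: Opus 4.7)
The plan is to adapt the classical Hersch/Szeg\"o "center of mass" trick, used in conformally invariant settings by Branson-Fontana-Morpurgo \cite{BFM} and Frank-Lieb \cite{FL}, to the qc conformal group of $S^{4n+3}$. First I would parameterize a $(4n+4)$-dimensional family of qc conformal self-maps of $S^{4n+3}$ by the open unit ball $B^{4n+4}\subset\mathbb{R}^{4n+4}$. Using the Cayley transform $\mathcal{C}$ from Section~2 together with the Heisenberg translations \eqref{translation} and dilations \eqref{scaling}, set
\[
\psi_{h,\lambda}\ :=\ \mathcal{C}^{-1}\circ\tau_h\circ\delta_\lambda\circ\mathcal{C},\qquad (h,\lambda)\in\QH\times\mathbb{R}_{+},
\]
extended continuously across the point omitted by $\mathcal{C}$. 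Since $\dim(\QH\times\mathbb{R}_+)=4n+4$, the family $\{\psi_{h,\lambda}\}$ together with the identity can be identified with $B^{4n+4}$ by an explicit diffeomorphism in which approaching the boundary sphere corresponds to letting $\lambda\to 0$, $\lambda\to\infty$, or $|h|\to\infty$ in a controlled way.

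Regarding each $\psi_\zeta:S^{4n+3}\to S^{4n+3}\subset\mathbb{R}^{4n+4}$, $\zeta\in B^{4n+4}$, as a vector-valued function via the ambient embedding, define
\[
V(\zeta)\ :=\ \int_{S^{4n+3}} \psi_\zeta(x)\,v(x)\,Vol_{\tilde\eta}(x)\ \in\ \mathbb{R}^{4n+4}.
\]
Continuity of $V$ on the open ball follows from the dominated convergence theorem, using the uniform bound $|\psi_\zeta(x)|=1$ and $v\in L^1$. The crucial step is the boundary behavior: for any $\zeta^*\in\partial B^{4n+4}=S^{4n+3}$, as $\zeta\to\zeta^*$ from inside, the family $\psi_\zeta$ degenerates in such a way that $\psi_\zeta(x)\to\zeta^*$ for almost every $x\in S^{4n+3}$. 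Combined with dominated convergence and the normalization $\int v\,Vol_{\tilde\eta}=1$, this yields $V(\zeta)\to\zeta^*$, so $V$ extends continuously to a map $\bar V:\bar B^{4n+4}\to\mathbb{R}^{4n+4}$ with $\bar V|_{\partial B^{4n+4}}=\mathrm{id}$.

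If $V$ had no zero in $B^{4n+4}$, then $V/|V|$ would be a continuous retraction of $\bar B^{4n+4}$ onto $S^{4n+3}$, contradicting Brouwer's no-retract theorem. Hence $V(\zeta)=0$ for some $\zeta\in B^{4n+4}$, and the sought transformation is $\psi:=\psi_\zeta$. I expect the main obstacle to be verifying the concentration property $\psi_\zeta(x)\to\zeta^*$ for a.e. $x$, which requires a careful asymptotic analysis of the degenerating family via the explicit Cayley transform formulas from Section~2 and the behavior of $\tau_h\circ\delta_\lambda$ at infinity on $\QH$.
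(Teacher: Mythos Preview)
Your proposal is correct and follows essentially the same Hersch/Szeg\"o argument as the paper: build a $(4n+4)$-parameter family of qc conformal self-maps of the sphere, integrate against $v$ to get a map from the ball to $\mathbb{R}^{4n+4}$ that restricts to the identity on the boundary, and invoke the no-retraction theorem.

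The one substantive difference is the parametrization of the conformal family. The paper does \emph{not} fix a single Cayley transform and use both translations and dilations; instead, for each $P\in S^{4n+3}$ it takes the Cayley transform $\mathcal{C}_N$ from the antipodal point $N$ (so that $P$ goes to the identity of $\QH$) and pulls back the pure dilation $\delta_r$, $0<r<1$. This gives maps $\psi_{r,P}$ parametrized directly by the open ball via $rP\mapsto\psi_{1-r,P}$, with the boundary behavior $\psi_{1-r,P}(Q)\to P$ as $r\to 1$ immediate from the fact that $\delta_r$ shrinks everything to the identity as $r\to 0$. Your parametrization $\mathcal{C}^{-1}\circ\tau_h\circ\delta_\lambda\circ\mathcal{C}$ with a fixed $\mathcal{C}$ also works, but the identification of $\QH\times\mathbb{R}_+$ with $B^{4n+4}$ so that the extended map restricts to the identity on the boundary sphere is genuinely less transparent---exactly the obstacle you flagged. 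The paper's choice sidesteps this by making the radial direction in the ball correspond to the dilation parameter and the angular direction to the concentration point, so the boundary claim is essentially built in.
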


\begin{proof}
Let $P\in S^{4n+3}$ be any  point of the quaternionic sphere and $N$ be its antipodal point. Let us consider
the local coordinate system near $P$ defined by the Cayley transform $\mathcal{C}_N$ from $N$. It is known that
$\mathcal{C}_N$ is a quaternionic contact conformal transformation between $ S^{4n+3}\setminus {N}$  and the
quaternionic Heisenberg group. Notice that in this coordinate system $P$ is mapped to the identity of the group.
For every $r$, $0<r<1$, let $\psi_{r,P}$ be the qc conformal transformation of the sphere, which in the fixed
coordinate chart is given on the group by a dilation  with center the identity by a factor $\delta_{r}$. If we
select a coordinate system in $\mathbb{R}^{4n+4}=\Hn\times\mathbb{H}$ so that $P=(1,0)$ and $N=(-1,0)$ and then
apply the formulas for the Cayley transform from \cite[Section 8.2]{IMV} the formula for
$(q^*,p^*)=\psi_{r,P}(q,p)$ becomes
\begin{equation*}
\begin{aligned}
q^* & =2r\left (  1 + r^2(1+p)^{-1} (1-p) \right )^{-1} \left ( 1+p \right ) q\\
p^* & =\left ( 1 +r^2(1+p)^{-1}(1-p) \right )^{-1} \left (  1-r^2(1+p)^{-1}(1-p) \right ), i.e,
\end{aligned}
\end{equation*}

We can define then the map $\Psi: B\rightarrow \bar B$, where $B$  ( $\bar B$ ) is the open (closed) unit ball
in $\mathbb{R}^{4n+4}$, by the formula $$\Psi(rP)=\int_{S^{4n+3}} \psi_{1-r,P}\, v\  Vol_{\tilde\eta}.$$ Notice
that $\Psi$ can be continuously extended to $\bar B$ since for any point $P$ on the sphere, where $r=1$, we have
$\psi_{1-r,P}(Q)\rightarrow P$ when $r\rightarrow 1$. In particular, $\Psi=id$ on $ S^{4n+3}$. Since the sphere
is not a homotopy retract of the closed ball it follows that there are $r$ and $P\in S^{4n+3}$ such that
$\Psi(rP)=0$, i.e., $\int_{S^{4n+3}} \psi_{1-r,P}\,v\  Vol_{\tilde\eta}=0$. Thus, $ \psi=\psi_{1-r,P}$ has the
required property.
\end{proof}

In the  next step we prove that we can assume that the minimizer of the Folland-Stein inequality satisfies the
zero center of mass condition.  A number of well known invariance properties of the Yamabe functional will be
exploited.

\begin{lemma}\label{l:zero mass is enough}
Let $v$ be a smooth positive function on the sphere with $\int_{S^{4n+3}} v^{2^*}\, Vol_{\tilde\eta}=1$. There is a smooth positive function $u$ such that $\int_{S^{4n+3}}\Bigl(4\frac {Q+2}{Q-2}\ \lvert \nabla u \rvert^2\ +\ \tilde{S}\, u^2\Bigr)\, Vol_{\tilde\eta}\  = \int_{S^{4n+3}}\Bigl(4\frac {Q+2}{Q-2}\ \lvert \nabla v \rvert^2\ +\ \tilde{S}\, v^2\Bigr)\, Vol_{\tilde\eta}$ and $\int_{S^{4n+3}} u^{2^*}\, Vol_{\tilde\eta}=1$.
In addition,
\begin{equation}\label{e:zero mass}
\int_{S^{4n+3}} P\, u^{2^*}(P) \, Vol_{\tilde\eta}=0, \qquad P\in \mathbb{R}^{4n+4}=\Hn\times\mathbb{H}.
\end{equation}
In particular, the Yamabe constant
\begin{equation}\label{e:var problem}
\lambda(S^{4n+3}, [\tilde\eta])\ =\ \inf \{ \int_{S^{4n+3}}\Bigl(4\frac {Q+2}{Q-2}\ \lvert \nabla v \rvert^2\ +\ \tilde{S}\, v^2\Bigr)
Vol_{\tilde\eta} :\ \int_{S^{4n+3}}  v^{2^*}\ Vol_{\tilde\eta} \ =\ 1, \ v>0  \}
\end{equation}
is achieved for a positive function $u$ with a zero center of mass, i.e., for a function $u$ satisfying \eqref{e:zero mass}.
\end{lemma}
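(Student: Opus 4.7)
The strategy is to apply Lemma~3 (the Hersch-type lemma in the qc setting) to $v^{2^*}$ and then to invoke the conformal invariance of the Yamabe functional to build $u$. First I would apply Lemma~3 with $v^{2^*}$, which is a non-negative $L^1$ function of total mass one, to obtain a qc conformal transformation $\tilde\psi:S^{4n+3}\to S^{4n+3}$ satisfying
\[
\int_{S^{4n+3}} \tilde\psi(P)\, v^{2^*}(P)\, Vol_{\tilde\eta}(P) = 0.
\]
Let $\psi=\tilde\psi^{-1}$, which is again a qc conformal transformation of the sphere, and write $\psi^{\ast}\tilde\eta = \Phi^{4/(Q-2)}\tilde\eta$ in the scalar conformal sense, with $\Phi$ a positive smooth function (the unit-quaternion conjugation that may appear in $\psi^{\ast}\tilde\eta$ does not affect either the volume form or the horizontal metric, and so plays no role in what follows). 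Finally define
\[
u \;=\; \Phi\cdot(v\circ\psi).
\]

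The $L^{2^*}$ normalization $\int u^{2^*}\,Vol_{\tilde\eta}=1$ follows immediately from $\Phi^{2^*}\,Vol_{\tilde\eta}=Vol_{\psi^{\ast}\tilde\eta}$ and the change of variables $Q=\psi(P)$. The equality of Yamabe integrals of $u$ and $v$ is the exact analogue of the computation already performed in the proof of Lemma~\ref{l:invariance}, now with $\psi^{\ast}\tilde\eta$ playing the role of $\Theta$ and $\tilde\eta$ playing its own role: the conformal covariance of the sublaplacian $\lap=a\triangle_{\tilde\eta}-\tilde S$ together with the Yamabe equation satisfied by $\Phi$ (namely $\lap_{\tilde\eta}\Phi=0$ for a qc automorphism pullback) yields
\[
\int_{S^{4n+3}}\!\bigl(a|\nabla^{\tilde\eta} u|^2+\tilde S\,u^2\bigr)Vol_{\tilde\eta} \;=\; \int_{S^{4n+3}}\!\bigl(a|\nabla^{\psi^{\ast}\tilde\eta}(v\circ\psi)|^2+\tilde S\,(v\circ\psi)^2\bigr)Vol_{\psi^{\ast}\tilde\eta} \;=\; \int_{S^{4n+3}}\!\bigl(a|\nabla^{\tilde\eta} v|^2+\tilde S\,v^2\bigr)Vol_{\tilde\eta}.
\]

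The zero center of mass condition is the point of the whole construction. Using the scaling of the volume form and the change of variables $Q=\psi(P)$,
\begin{align*}
\int_{S^{4n+3}} P\,u^{2^*}(P)\,Vol_{\tilde\eta}(P) &= \int_{S^{4n+3}} P\,(v\circ\psi)^{2^*}(P)\,Vol_{\psi^{\ast}\tilde\eta}(P) \\
&= \int_{S^{4n+3}} \psi^{-1}(Q)\,v^{2^*}(Q)\,Vol_{\tilde\eta}(Q) \\
&= \int_{S^{4n+3}} \tilde\psi(Q)\,v^{2^*}(Q)\,Vol_{\tilde\eta}(Q) \;=\; 0,
\end{align*}
by the defining property of $\tilde\psi$. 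The last assertion of the lemma, that \eqref{e:var problem} is attained at a minimizer with vanishing center of mass, follows at once: the existence of a smooth positive minimizer on the sphere is already known (via the Folland-Stein minimizer on $\QH$ and Lemma~\ref{l:invariance}), and applying $v\mapsto u$ to such a minimizer produces another minimizer satisfying \eqref{e:zero mass}.

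The main obstacle here is essentially bookkeeping: one must verify that the qc conformal automorphism $\psi$ indeed pulls back $\tilde\eta$ to a scalar conformal multiple modulo a harmless unit-quaternion conjugation, and that the resulting scalar factor $\Phi$ produces the same volume-form and horizontal-metric scaling exploited in Lemma~\ref{l:invariance}. Once this is in hand, the three desired identities reduce to straightforward changes of variables, and the construction really hinges on a single nontrivial ingredient, namely Lemma~\ref{l:hersch}.
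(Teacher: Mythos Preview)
Your argument is correct and follows the same route as the paper: apply Lemma~\ref{l:hersch} to $v^{2^*}$, then use the conformal covariance of the Yamabe functional under the resulting qc conformal automorphism to transfer $v$ to a well-centered $u$. One small slip: the parenthetical ``$\lap_{\tilde\eta}\Phi=0$'' is not right---since $\psi^*\tilde\eta$ has the same qc scalar curvature $\tilde S$ as $\tilde\eta$, the conformal factor actually satisfies $\lap_{\tilde\eta}\Phi=-\tilde S\,\Phi^{2^*-1}$---but you do not use this equation anyway, and your displayed chain of equalities follows directly from the conformal covariance identity $\Phi^{-1}\lap_{\psi^*\tilde\eta}(\Phi^{-1}w)=\Phi^{-2^*}\lap_{\tilde\eta}w$ together with diffeomorphism invariance, exactly as in the paper.
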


\begin{proof}
By  \cite[Section 8.1]{IMV}, $Vol_{\eta}=\eta_1\wedge\eta_2\wedge\eta_3\wedge (\omega_1)^{2n}$ is a volume form
on a qc manifold with contact form $\eta$. Thus if $\eta$ is a qc structure on the sphere which is qc conformal
to the standard qc structure $\tilde\eta$,  $\eta=\phi^{4/(Q-2)} \tilde \eta$, then $Vol_{\eta}=\phi^{2^*}
Vol_{\tilde\eta}$. This allows to cast equation \eqref{e:conf Yamabe} in the form $$\phi^{-1}v\,
\mathcal{L}(\phi^{-1}v)\ Vol_{\eta}=v\mathcal{\tilde L}(v)\ Vol_{\tilde\eta}.$$ Therefore, if we take a positive function
$v$ on the sphere $\int_{S^{4n+3}}  v^{2^*}\, Vol_{\tilde\eta}=1$ and then consider the function
\begin{equation}\label{e:zero mass transform}
u=\phi^{-1}(v\circ
\psi^{-1}),
 \end{equation}
 where $\psi$ is the qc conformal map of Lemma \ref{l:hersch}, $\eta\equiv(\psi^{-1})^*\tilde\eta$, and $\phi$ is the corresponding conformal factor of $\psi$, we can see that $u$ achieves the claim of the Lemma.
\end{proof}

We shall call a function $u$ on the sphere a \textit{well centered} function when \eqref{e:zero mass} holds true.
In the next step we show that a well centered minimizer has to be constant.
\begin{lemma}\label{l:const mnimizer}
If $u$ is a well centered local minimum of the problem \eqref{e:var problem}, then $u\equiv const$.
\end{lemma}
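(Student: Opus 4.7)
The plan is to exploit the local minimality of $u$ by applying the second variation (stability) inequality to the test functions $\phi=\zeta_i u$, where $\zeta_1,\dots,\zeta_{4n+4}$ are the ambient coordinate functions of $\mathbb{R}^{4n+4}=\Hn\times\mathbb{H}$ restricted to $S^{4n+3}$. The well-centering hypothesis \eqref{e:zero mass} is exactly the tangency condition $\int u^{2^*-1}\phi\,Vol_{\tilde\eta}=0$ needed for the standard form of the stability inequality, and this is the key technical ingredient.

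Concretely, $u$ satisfies the qc Yamabe equation $-a\tilde\triangle u+\tilde S u=\lambda u^{2^*-1}$ (with $a=4(Q+2)/(Q-2)$ and $\lambda=\lambda(S^{4n+3},[\tilde\eta])$), and local minimality gives the stability inequality
\[
\int_{S^{4n+3}}\bigl(a|\nabla\phi|^2+\tilde S\phi^2\bigr)\,Vol_{\tilde\eta}\ \geq\ \lambda(2^*-1)\int_{S^{4n+3}}u^{2^*-2}\phi^2\,Vol_{\tilde\eta}
\]
for every $\phi$ satisfying $\int u^{2^*-1}\phi\,Vol_{\tilde\eta}=0$. Substituting $\phi=\zeta_i u$ (permissible by well-centering) yields one lower bound for the Yamabe integral of $\zeta_i u$. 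An independent expression for the same quantity follows from expanding $|\nabla(\zeta_i u)|^2$, integrating the cross term by parts, and invoking the Euler--Lagrange equation tested against $\zeta_i^2 u$ together with the eigenvalue identity $\tilde\triangle\zeta_i=-2n\zeta_i$ from \eqref{e:eigen functions}, producing
\[
\int\bigl(a|\nabla(\zeta_i u)|^2+\tilde S\zeta_i^2 u^2\bigr)\,Vol_{\tilde\eta}\ =\ \lambda\int\zeta_i^2 u^{2^*}\,Vol_{\tilde\eta}\ +\ a\int u^2|\nabla\zeta_i|^2\,Vol_{\tilde\eta}.
\]

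Comparing the two estimates yields $a\int u^2|\nabla\zeta_i|^2\geq \lambda(2^*-2)\int\zeta_i^2 u^{2^*}$ for each $i$. Summing over $i$, using $\sum_i\zeta_i^2\equiv 1$ on $S^{4n+3}$ together with the consequence $\sum_i|\nabla\zeta_i|^2=2n$ (which follows from $\tilde\triangle(\zeta_i^2)=2\zeta_i\tilde\triangle\zeta_i+2|\nabla\zeta_i|^2$ combined with $\tilde\triangle(1)=0$), and invoking the arithmetic identity $2an/(2^*-2)=2n(Q+2)=\tilde S$ via \eqref{e:Scal of st sphere}, one collapses everything to $\tilde S\int u^2\geq\lambda$. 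However $\lambda=\int(a|\nabla u|^2+\tilde S u^2)\,Vol_{\tilde\eta}$, forcing $a\int|\nabla u|^2\leq 0$, hence $|\nabla u|\equiv 0$. Since the horizontal distribution on $S^{4n+3}$ is bracket-generating, $u$ must be constant. The delicate calibration to verify is precisely that after summation the factor $(2^*-1)$ coming from the second variation, combined with the Euler--Lagrange identity, leaves exactly the coefficient $\tilde S$ on the left-hand side of the final inequality; once this numerical match is observed, the conclusion is immediate.
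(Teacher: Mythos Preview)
Your proof is correct and follows essentially the same strategy as the paper: test the second variation inequality with $\phi=\zeta_i u$ (admissible by well-centering), sum over the ambient coordinate functions using $\sum_i\zeta_i^2=1$, and invoke the eigenvalue relation $\tilde\triangle\zeta_i=-2n\zeta_i$ together with the numerical coincidence $2n=\tilde S/(Q+2)$ to force $\int|\nabla u|^2\le 0$. The only cosmetic difference is that the paper expands $\Upsilon(\zeta u)$ directly (obtaining a term $-a\int u^2\zeta\tilde\triangle\zeta$) while you route the same computation through the Euler--Lagrange equation tested against $\zeta_i^2 u$ and the identity $\sum_i|\nabla\zeta_i|^2=2n$; one minor slip is labeling the Lagrange multiplier as $\lambda(S^{4n+3},[\tilde\eta])$---for a local minimum one only has $\lambda=\Upsilon(u)$, but since that is exactly what you use in the final step, the argument is unaffected.
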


\begin{proof}
Let $\zeta$ be a smooth function on the sphere $S^{4n+3}$. After applying the divergence formula \cite[Section
8]{IMV} we obtain the formula
\begin{equation}\label{e:Upsilon for zeta u}
\Upsilon (\zeta u) = \int_{S^{4n+3}}\zeta^2 \Bigl(4\frac {Q+2}{Q-2}\ \lvert \tilde\nabla u \rvert^2\ +\
\tilde{S}\, u^2\Bigr)\  Vol_{\tilde\eta} \ - \ 4\frac {Q+2}{Q-2}\int_{S^{4n+3}} u^2 \zeta{\tilde \triangle}
\zeta \ Vol_{\tilde\eta}.
\end{equation}
This suggests to take as a test function $\zeta$ an eigenfunction of the sub-laplacian ${\tilde \triangle}$ of
the standard qc structure.  In particular, we can let $\zeta$ be any of the coordinate functions in
$\Hn\times\mathbb{H}$ in which case ${\tilde \triangle} \zeta =-\lambda_1 \zeta$.

It will be useful to introduce the functional $N(v)=\left ( \int_{S^{4n+3}}  v^{2^*}\ Vol_{\tilde\eta}\right
)^{2/2^*}$ so that
\begin{equation}\label{e:Yamabe energy}
\lambda(S^{4n+3},[\tilde\eta]) = \inf \{ \mathcal{E}(v):\, v\in D \, (S^{4n+3}) \},\qquad \mathcal{E}(v)\overset{def}{=}\Upsilon ( v) / N(v).
\end{equation}
Computing the second variation $\delta^2 \mathcal{E}(u)v = \frac {d^2}{dt^2} \mathcal{E}(u+tv)_{|_{t=0}}$ of $\mathcal{E}(u)$ we see that the local minimum condition $\delta^2 \mathcal{E}(u)v\geq 0$ implies
\begin{equation*}
\Upsilon ( v) -(2^*-1)\Upsilon ( u)\int_{S^{4n+3}}  u^{2^*-2}v^2\ Vol_{\tilde\eta}  \ \geq 0
\end{equation*}
for any function $v$ such that $\int_{S^{4n+3}}  u^{2^*-1}v\ Vol_{\tilde\eta}=0$.
Therefore, for $\zeta$ being any of the coordinate functions in $\Hn\times\mathbb{H}$ we have
$$
\Upsilon (\zeta u) -(2^*-1)\Upsilon ( u)\int_{S^{4n+3}}  u^{2^*}\zeta^2\ Vol_{\tilde\eta}  \ \geq 0,
$$
which after summation over all coordinate functions taking also into account \eqref{e:Upsilon for zeta u} gives
\begin{equation*}
\Upsilon (u)  - (2^*-1)\Upsilon ( u) -4\lambda_1(2^*-1)   \int_{S^{4n+3}} u^2 \ Vol_{\tilde\eta}\geq 0,
\end{equation*}
which implies, recall $2^*-1=( {Q+2})/({Q-2})$,
\begin{multline*}
0\leq 4(2^*-1)\left ( 2^*-2\right)\int_{S^{4n+3}} |\tilde \nabla u|^2\ Vol_{\tilde\eta} \\
\leq \ \left (4\lambda_1(2^*-1) -\left (2^*-2\right )\tilde S \right )  \int_{S^{4n+3}}  u^{2^*}\
Vol_{\tilde\eta}.
\end{multline*}
Thus, our task of showing that $u$ is constant will be achieved once we see that
\begin{equation}\label{e:key ineq}
4\lambda_1(2^*-1) -\left (2^*-2\right )\tilde S \leq 0,\  \text{ i.e, }\  \lambda_1\leq \tilde S/(Q+2).
\end{equation}
 By Lemma \ref{e:eigen functions} we
 have actually equality $\lambda_1= {\tilde S}/{(Q+2)}$,
which completes the proof.  It is worth observing that inequality \eqref{e:key ineq} can be written in the form
\begin{equation*}
\lambda_1\, a \leq (2^*-2) \, \tilde S,
\end{equation*}
where $a$ is the constant in front of the (sub-)laplacian  in the conformal (sub-)laplacian, i.e., $a=4\frac {Q+2}{Q-2}$ in our case.
\end{proof}

At this point the proof of our main Theorem \ref{t:main} follows easily as follows.
\begin{proof}[Proof of Theorem \ref{t:main}]
 Let $F$ be a minimizer (local minimum) of the Yamabe functional $\mathcal{E}$ on $\QH$ and $g$ the corresponding function on the sphere defined in Lemma \ref{l:invariance}. By Lemma \ref{l:zero mass is enough} and \eqref{e:zero mass transform} the function $g_0=\phi^{-1}(g\circ
\psi^{-1})$ will be well centered and a  minimizer (local minimum) of the Yamabe functional $\mathcal{E}$ on $S^{4n+3}$. The latter claim uses also the fact that the map $v\mapsto u$ of equation \eqref{e:zero mass transform} is one-to-one and onto on the space of smooth positive functions on the sphere. Now, from Lemma  \ref{l:const mnimizer}  we conclude that $g_o=const$. Looking back at the corresponding functions on the group we see that
\begin{equation*}
F_0 =\gamma\,{\left [(1+|q'|^2)^2+ |\omega'|^2 \right ]}^{-(Q-2)/4}
\end{equation*}
for some $\gamma=const.>0.$
Furthermore, the proof of Lemma \ref{l:hersch} shows that $F_0$ is obtained from $F$ by a translation \eqref{translation} and dilation \eqref{scaling}. Correspondingly, any positive minimizer (local maximum) of problem \eqref{S}  is given up to dilation or translation by the function
\begin{equation}\label{e:extremals}
F =\gamma\,{\left [(1+|q'|^2)^2+ |\omega'|^2 \right ]}^{-(Q-2)/4}, \qquad \gamma=const.>0.
\end{equation}
Of course,  translations \eqref{translation} and dilations \eqref{scaling} do not change the value of $\mathcal{E}$. Incidentally, this shows that any local minimum of the Yamabe functional $\mathcal{E}$ on the sphere or the group has to be a global one.

We turn to the determination of the best constant. Let us define the constants
\begin{equation}  \label{S}
\begin{aligned}
& \Lambda_{\tilde\Theta}\ \overset{def}{=}\ inf\ \left\{ \underset{\QH}{\int} |\nabla v|^2\ Vol_{\tilde\Theta}\ : \ v\in \domG,\ v \geq 0,\ \underset{\QH}{\int} |v|^{2^*}\ Vol_{\tilde\Theta}\ =\ 1 \right\}\\
& \text{and}\\
& \Lambda \overset{def}{=}\ inf\ \left\{ \underset{\QH}{\int} |\nabla v|^2\ dH\ : \ v\in \domG,\ v \geq 0,\ \underset{\QH}{\int} |v|^{2^*}\ dH\ =\ 1 \right\}.
\end{aligned}
\end{equation}
Clearly, $\Lambda_{\tilde\Theta}\ {=}\
S_{\tilde\Theta}^{-2}$, where $S_{\tilde\Theta}$ is the best constant in the $L^2$ Folland-Stein
inequality
\begin{equation} \label{e:FS-theta}
\left(\int_{\QH} \ |u|^{2^*}\ Vol_{\tilde\Theta}\right)^{1/2^*} \leq\ S_{\tilde\Theta}\ \left(\int_{\QH }\ |\nabla^{\tilde\Theta} u|^2\ Vol_{\tilde\Theta}\right)^{1/2},
\end{equation}
while $\Lambda\ {=}\ S_{2}^{-2}$ is the best constant in the $L^2$ Folland-Stein
inequality \eqref{FS} (taken with respect to the Lebesgue measure !).  By Remark \ref{r:volume form vs lebesgue} we have $$\Lambda_{\tilde\Theta}\ {=}\ \left [ 2^{-3}(2n)! \right ]^{1/(2n+3)}\ \Lambda.$$
Furthermore, by Lemma \ref{l:const mnimizer} and equations \eqref{e:invariance of dirichlet 2} and \eqref{e:g def} with $g=const$, we have
\begin{multline*}
\Lambda_{\tilde\Theta}\ =\ \frac {1}{S_2^2}=\frac {\int_{\bG}\ |\nabla^{\tilde\Theta} F|^2\, Vol_{\tilde\Theta}}
{ \left [ \int_{\bG} |F|^{2^*}\, Vol_{\tilde\Theta} \right ]^{2/2^*} }\\
=\ \frac {\int_{S^{4n+3}}\ \left ( |\nabla^{\tilde\eta} g|^2 + \frac {\tilde S}{a}\, g^2 \right ) \,
 Vol_{\tilde\eta}}{\left [ \int_{S^{4n+3}} |g|^{2^*}\, Vol_{\tilde\eta}\right ]^{2/2^*}}\ =\ 4n(n+1)\left [ \left ( (2n)! \right) \omega_{4n+3}\right ]^{1/(2n+3)}.
\end{multline*}
 Here, $$\omega_{4n+3}=2\pi^{2n+2}/\Gamma(2n+2)=2\pi^{2n+2}/(2n+1)!$$ is the volume of the unit sphere $S^{4n+3}\subset \mathbb{R}^{4n+4}$ and we also took into account Remark \ref{r:standctct form} which shows that $ Vol_{\tilde\eta}$ gives  $2^{2n+3}\left ( (2n)! \right) \omega_{4n+3}$ for the volume of $S^{4n+3}$.  Thus,
\[
S_{\tilde\Theta}\ =\ \left ( 4n(n+1)\left [ \left ( (2n)! \right) \omega_{4n+3}\right ]^{1/(2n+3)}\right )^{-1/2}\ =\ \frac { \left [ \left ( (2n)! \right) \omega_{4n+3}\right ]^{-1/(4n+6)}}{2\sqrt{n(n+1)}},
\]
which completes the proof of part a).

b) The Yamabe constant of the sphere is calculated immediately by taking a constant function in \eqref{e:Yamabe energy}  \begin{equation}\label{e:yamabe constant 2}
\lambda(S^{4n+3}, [\tilde\eta])=a\, \Lambda_{\tilde\Theta}, \qquad a=4\frac {Q+2}{Q-2}=4\frac {n+2}{n+1}.
\end{equation}

This completes the proof of Theorem \ref{t:main}.
\end{proof}

\begin{rmrk}
In view of the above Lemmas it follows that in the conformal class of the standard qc structure on the sphere (or the quaternionic Heisenberg group) there is an extremal  qc contact form for problem \eqref{e:Yamabe constant problem} which is also qc-Einstein, see \cite[Definition 4.1]{IMV}, and has partial symmetry, see \cite[Definition 1.2]{GV}, if viewed as a qc structure on the group.
Thus, the above precise constants and extremals can also be taken directly from \cite[Theorem 1.1 and 1.2]{IMV} or the result of \cite{GV}.  However, the  functions \eqref{e:extremals}  depend
on one more arbitrary multiplicative parameter $\gamma$ since in the current paper we are dealing with the functions realizing the infimum of \eqref{e:Yamabe energy} rather than with the qc Yamabe equation with a fixed qc scalar curvature.
\end{rmrk}

\end{document}